\documentclass{amsart}

\usepackage{amssymb}
\usepackage{amsmath,amsfonts,amsthm,amstext}
\usepackage{amsmath}
\usepackage{mathrsfs}  
\usepackage[all]{xy}
\usepackage{xcolor}
\usepackage{tikz-cd}

\newtheorem{theorem}{Theorem}[section]
\newtheorem{lemma}[theorem]{Lemma}

\newtheorem{prop}[theorem]{Proposition}
\newtheorem{cor}[theorem]{Corollary}

\newcommand{\Z}{\mathbb{Z}}

\newcommand{\Q}{\mathbb{Q}}

\theoremstyle{definition}

\begin{document}

		\title[]{The free multiplicative Lie algebra $L(P)$ for a finitely generated parafree group $P$} \author{ 
  Dessislava H. Kochloukova}
	\address{State University of Campinas (UNICAMP), SP, Brazil \\
		\newline
		email : desi@unicamp.br
		} 
	\email{}
	%\subjclass[2010]{Primary 20J05; Secondary 20F05;}
	\date{}
	\keywords{}

\begin{abstract} Let $P$ be a group, $L(P)$ be the free multiplicative Lie algebra with normal subgroup $\Gamma_n(P)$ generated by Lie bracket ``commutators''  of weight $n$ and $\{ \gamma_n(P) \}$ be the lower central series of $P$. We prove  that $\Gamma_n(P) \simeq \gamma_n(P)$ for arbitrary $n \geq 1$ and $P$ a finitely generated parafree group such that $H_2(P, \mathbb{Z}) = 0 = H_3(P, \mathbb{Z})$ (e.g. $P$ satisfies the Strong Parafree Conjecture), in particular Ellis's conjecture holds i.e. the above isomorphism holds for finitely generated free group $P$ but this easily implies it holds for any free group. 
\end{abstract}

\maketitle

\section{Introduction}

Multiplicative Lie algebras were defined by Ellis in \cite{Ellis2}. Multiplicative Lie algebras can be viewed as generalisations of both Lie algebras and groups. The notion of non-abelian tensor product of multiplicative Lie algebras was introduced by Donadze, Inassaridze, Ladra in \cite{D-I-L} generalising the non-abelian tensor product of
groups defined by Dennis in \cite{Den} and  developed by Brown, Loday and Robertson  in \cite{B-L}, \cite{B-L1}, \cite{B-J-R} and generalising the  non-abelian tensor product of Lie algebras defined by Ellis in  \cite{Ellis3}. \iffalse{In \cite{D-I-L-V}  Donadze, Inassaridze, Ladra and Vieites  proved that the non-abelian tensor product of nilpotent, solvable and Engel multiplicative Lie algebras is
nilpotent, solvable and Engel, respectively. }\fi

In \cite{Ellis2} Ellis conjectured that five universal relations, stated in our preliminary section \ref{section2.1} as $(i) -(v)$,  applied to commutators of a fixed weight $n$ generate all universal relations between commutators of the same weight $n$.
Earlier  Miller proved in \cite{Miller}  that any universal
relation among commutators is deduced from four given relations without fixing the weights of the commutators.  The conjecture depends on the notion of 
a free multiplicative Lie algebra $L(P)$, which by definition is actually a group, defined   for any group $P$ in \cite{Ellis2} and some important normal subgroups $\Gamma_n(P)$ of $L(P)$. The identity group homomorphism on $P$ induces
a surjective map of multiplicative Lie algebras
$$\theta : L(P) \to P$$ where $P$ is viewed as a multiplicative Lie algebra with respect to the (standard) commutator, and $\theta$  restricts to a group epimorphism $$\theta_n: \Gamma_n(P) \to \gamma_n(P)$$ 
where $\{ \gamma_n(P) \}$ is the lower central series of $P$ given by $\gamma_1(P) = P$, $\gamma_{n+1} (P) = [\gamma_n(P), P]$ and the group $\Gamma_n(P)$ is the subgroup of $L(P)$
generated by the elements $\{\{\ldots \{\{x_1, x_2\}, x_3 \} , \ldots \} , x_n\}$ for $x_1, \ldots, x_n \in P$.  By \cite{Ellis2} if $G$ is  a $p$-group then $L(P)$ is  a $p$-group; if $P$ is finite then $\Gamma_n(P)$ is finite for all $n \geq 1$; if $P$  is nilpotent of class $c$ then each $\Gamma_n(P)$ is abelian for $n \geq c+1$. \iffalse{For perfect groups $P$ the group $L(P)$ is isomorphic to $U \rtimes P$, where $U$ is the universal central extension of $P$ i.e. $U \simeq P \wedge P$. }\fi

\medskip
{\bf The Ellis Conjecture for $L(P)$} {\it For a free group $P$ the epimorphism $\Gamma_n(P) \to \gamma_n(P)$ is  an isomorphism.}

\medskip
One of the main results of \cite{Ellis2} is that $\theta_2$ is an isomorphism if $H_2(P, \mathbb{Z}) = 0$ and $\theta_3$ is an isomorphism if $H_1(P, \Z)$ is torsion-free and $H_2(P, \mathbb{Z}) = 0$. The fact that $\theta_1$ is an isomorphism for an arbitrary group $P$ follows directly from the definition of $\theta_1$. The fact that for a free group $P$  the map $\theta_4$ is an isomorphism was proved by Donadze and Ladra in \cite{D-L1}. Surprisingly no new developments on the conjecture were published after \cite{D-L1}. Our first result is a positive solution of the conjecture.

\medskip
{\bf  Theorem A (= Weak Main Theorem)} {\it Let $F$ be a finitely generated free group. Then $\theta_n : \Gamma_n(F) \to \gamma_n(F)$ is an isomorphism for every $n \geq 1$.}

\medskip The case of infinitely generated free groups is solved later in Theorem B.
Our proof of the Theorem  A is  of homological nature, it uses the non-abelian exterior product $F \wedge \gamma_n(F)$ and in its final part it uses \cite[Remark2]{Ellis2}, which was the initial motivation for this work. But the similarities with the proofs of the low dimensional cases $n = 2,3,4$ end here.  The new ingredient is to use and  compare homologies of discrete groups with the pro-$p$ (i.e. continious)  homologies of the pro-$p$ completions for prime integer $p$. 
We prove that the map $H_3(G, \mathbb{Z}) \to H_3^{cont}( \widehat{G}_p, \mathbb{Z}_p)$, induced by the canonical map $G \to \widehat{G}_p$,  is injective for $G = F / \gamma_n(F)$ and $ \widehat{G}_p$ the pro-$p$ completion of $G$. 
This enables us to show  that $(F \wedge \gamma_n(F)) \rtimes F$ is residually $p$-finite, hence $\cap_m [F \wedge \gamma_n(F),_m F] = 1 $ and we can use \cite[Remark2]{Ellis2} that can be restated as $\cap_{m \geq 1} [\Gamma_n(F),_m F] = Ker (\Gamma_n(F) \to \gamma_n(F))$. In several places in the proofs we will use pro-p constructions that generalise the non-abelian tensor square of a pro-p group, first defined by Moravec in \cite{Mor}, but we will simply define them as the appropriate inverse limits of the corresponding finite discrete constructions. It is important that when the corresponding groups are $p$-finite the constructions $ \otimes, \wedge$ give $p$-finite groups  \cite{Ellis}.

We examine the conditions used in the proof of  Theorem A and we obtain Theorem \ref{cor}, a result where the group is not necessarily free but still it has to satisfy a lot of conditions due to the homological nature of our proof of   Theorem A.
We apply Theorem \ref{cor} for the class of finitely generated parafree groups introduced by Baumslag  in \cite{Baumslag}, \cite{Bau2}, \cite{Bau3}, \cite{Bau4}.  A group is parafree if it is residually nilpotent and its quotients by the terms of its lower central series are the same as those of a fixed free group. Every free group is parafree but  a parafree group is not necessarily free \cite{Ba-Cl}, \cite{Baumslag2}, \cite{JZ-M}.  In \cite{Coch} Cochran comments on the following  conjecture that is still open: 

\medskip
{\bf Strong Parafree Conjecture:} (Baumslag)  {\it Let $P$ be a
finitely generated parafree group. Then $H_2(P, \mathbb{Z}) = 0$ and the cohomological dimension $cd(P) \leq 2$.}

\medskip In the above conjecture the condition that the parafree group is finitely generated is not redundant.
Bousfield showed in \cite{Bous} that for a finitely generated, non-cyclic, free group $F$ and its the pro-nilpotent completion ${\mathcal{F}} = \varprojlim_n F / \gamma_n(F)$  the homology group $H_2(\mathcal{F}, \mathbb{Z})$ is not countable but $\mathcal{F}$ is  parafree. It is not known what is the cohomological dimension of $\mathcal{F}$ and whether $H_3(\mathcal{F}, \mathbb{Z})$ is zero. In \cite{B-I-M} Basok, Ivanov and Mikhailov proved that $H_2(\mathcal{F}, \mathbb{Z})$ is not a cotorsion group i.e. $Ext^1_{\Z}( \mathbb{Q},H_2(\mathcal{F}, \mathbb{Z})) \not= 0$. Recently Fisher and Klinge proved in \cite{Fish-Kl}  that if $P$ is a finitely generated
parafree group with $cd(P) = 2$, then $P$ satisfies the (Strong) Parafree Conjecture if
and only if the terms of its lower central series are eventually free. Furthermore in \cite[Remark 6.15]{Fish-Kl} it was shown that finitely generated parafree groups
satisfying the Strong Parafree Conjecture are virtually free-by-cyclic.

By Proposition \ref{parafree} every finitely generated parafree group $P$ such that  $H_2(P, \mathbb{Z}) = 0 = H_3(P, \mathbb{Z}) $ ( e.g. $P$  satisfies the Strong Parafree Conjecture) satisfies the conditions of Theorem \ref{cor}.

\medskip
{\bf Theorem B (= Strong Main Theorem)} {\it 
Let $P$ be a parafree group such that $H_2(P, \mathbb{Z}) = 0 = H_3(P, \mathbb{Z}) $ ( e.g. $P$ satisfies the conclusions of the Strong Parafree Conjecture). Suppose further that if $P$ is not finitely generated then it is free.  Then $\theta_n : \Gamma_n(P) \to \gamma_n(P)$ is an isomorphism for every $n \geq 1$.}

\medskip
We note that Stallings proved in \cite{Stal} that if $\varphi : H \to G$ is a group homomorphism   that induces an isomorphism $H/ \gamma_2(H) \to G/ \gamma_2(G)$ and an epimorphism $H_2(H, \Z) \to H_2(G, \Z)$ then 
$\varphi$ induces an isomorphism $H/ \gamma_n(H) \to G/ \gamma_n(G)$ for every $n \geq 1$. In particular if $H_2(G, \Z) = 0$, $G$ is residually nilpotent and $G/ \gamma_2(G)$ is free abelian group then $G$ is a parafree group.

\iffalse{ Next we characterise when  a finitely generated parafree group $P$ has trivial  Schur multiplier in terms of the residuall $p$-finiteness of the  group $\nu(P)$. The group $\nu(P)$ was defined by Rocco in \cite{Norai} and its generalisation $\eta(G, H) \simeq ((G \otimes H) \rtimes G) \rtimes H$ was defined by Ellis and Leonard in \cite{El-Le}, where  $\eta(G,G) \simeq \nu(G)$.

\medskip
{\bf Theorem C} {\it Let $P$ be a finitely generated parafree group and $p$ be an odd prime integer. Then $\nu(P)$ is residually $p$-finite if and only if $H_2(P, \Z) = 0$. }

\medskip We need in Theorem C that $p\not=2$ in order to apply a result of Blyth, Fumagalli and Morigi from \cite{B-F-M}  that if $G^{ab}$ does not contain 2-torsion then the epimorphism $G \otimes G \to G \wedge G$ between non-abelian tensor square and the non-abelian exterior square  splits.}\fi

Finally we observe that the Baumslag Strong Parafree Conjecture holds for 1-relator finitely generated parafree groups. We believe this  is well-known but as we could not find a reference we include a short proof.

\medskip
{\bf Corollary C} {\it Baumslag's Strong Parafree Conjecture holds for 1-relator finitely generated parafree groups $P$. In particular $\Gamma_n(P) \simeq \gamma_n(P)$ for all $ n $.
}

\medskip
{\bf Acknowledgements} The author is partially supported by bolsa de produtividade em pesquisa CNPq 301222/2026-6 and FAPESP 2024/14914-9. 
\section{Preliminaries} 

\subsection{Multiplicative Lie algebra} \label{section2.1} 
In \cite{Ellis2} the following universal commutator relations are considered: for every group $G$ and $x, x_1, y, y_1,z \in G$ we have

\medskip
(i) $[x,x] = 1$,

(ii) $ [x,y y_1] = [x,y ] \  (^y [x,y_1])$,

(iii) $[xx_1 ,y] = (^{x}[x_1, y]) \ [x, y]$ ,

(iv)  $[[y, x], ^x z][[x, z], ^z y][[z,y],^y x] = 1$,

(v) $^z[x,y] = [^z x,^z y]$

where $[x,y] = xy x^{-1}y^{-1}$ and $^x y = xy x^{-1}$. 

\medskip
In \cite{Ellis2} Ellis conjectured that these universal relations applied to commutators of a fixed weight $n$ generate all universal relations between commutators of  weight n. To formalise this Ellis introduced a new notion of multiplicative Lie algebra.
A multiplicative Lie algebra $D$ is  a multiplicative group $D$ with a function $\{ , \} : D \times D \to D$ 
satisfying the following identities for all $x, x_1, y, y_1,z \in D$:

\medskip
(i)' $\{x,x \} = 1$,

(ii)' $ \{x,y y_1\} = \{x,y \} (^y \{x,y_1\})$,

(iii)' $\{xx_1 ,y\} = (^{x}\{x_1, y\})\{x, y\}$ ,

(iv)'  $\{\{y, x\}, ^x z\}\{\{x, z\}, ^z y\}\{\{z,y\},^y x\} = 1$,

(v)' $^z\{x,y\} = \{^z x,^z y\}.$

\medskip
The above identities imply:

\medskip
(vi) $\{1, x\} = \{x, 1\} = 1$

(vii) $\{x,y\} = \{y,x\}^{-1}$ 

(viii) $^{ \{x,y \} } \{x_1,y_1\} = ^{[x, y]}\{x_1,y_1\}$

(ix) $[\{x,y\},x_1] = \{[x,y],x_1\}$

(x) $\{x^{-1},y\} = ^{x^{-1}}\{x,y\}^{-1}$ and $ \{y, x^{-1} \} = ^{x^{-1}} \{y, x\}^{-1}$

\medskip
Any group is a multiplicative Lie algebra with $\{ , \}$ coinciding with $[ , ]$.
Any Lie algebra over $\Z$ is a multiplicative Lie algebra with group operation $+$ and $\{ , \}$ the standard Lie bracket.

For any group $P$ there exists the free multiplicative Lie algebra $L(P)$ on $P$ such that
$P$ is a subgroup of $L(P)$ and for
any group homomorphism $P \to M$ from $P$ to a multiplicative Lie algebra
$M$ extends uniquely to a homomorphism of multiplicative Lie algebras $L(P) \to M$ i.e. that map commutes with group product and $ \{ -, - \}$. By definition $\Gamma_n(P)$ is the subgroup of $L(P)$
generated by the elements $\{\{\ldots \{\{x_1, x_2\}, x_3 \} , \ldots \} , x_n\}$ for $x_1, \ldots, x_n \in P$. Thus $\Gamma_1(P) = P$.  The group homomorphism
$$\theta_n : \Gamma_n(P) \to \gamma_n(P)$$ sends $\{\{\ldots \{\{x_1, x_2\}, x_3 \} , \ldots \} , x_n\}$ to the left-normed commutator $[x_1, x_2, x_3 , \ldots , x_n]$ By \cite{Ellis2} for any bracketing $\beta$ of the operation $\{ \ , \ \}$ of length $n$  we have $\beta(P) \subseteq \Gamma_n(P)$ and $\Gamma_n(P)$ is a normal subgroup of $L(P)$ for $n \geq 2$. Furthermore $L(P) = \cup_n \Gamma_1(P) \Gamma_2(P) \ldots \Gamma_n(P)$ and $\Gamma_i(P) \cap \Gamma_j(P) \not= 1$ for $i,j \geq 2$ since $1 \not= [\Gamma_i(P), \Gamma_j(P)] \subseteq \Gamma_i(P) \cap \Gamma_j(P)$.

\subsection{The non-abelian tensor product and the construction $ \eta$} \label{prelim-exact} Let $G$
and $H$ be groups acting compatibly on each other. We write $^g h $ for the element of $H$ produced by the action of $g \in G$ on $h \in H$ and $^h g $   for the element of $G$ produced by the action of $h \in H$ on $g \in G$. As usual $^{g_1} g = g_1 g g_1^{-1}$ and $^{h_1} h = h_1 h h^{-1}$ for $g, g_1 \in G, h_1, h \in H$. The compatibility condition on the action is $^{^g h} g_1 = ^{g h g^{-1}} g_1$ and $ ^{^h g} h_1 = ^{h g h^{-1}} h_1$ for all $g, g_1 \in G, h, h_1 \in H$.

The following construction was first defined in \cite{El-Le}, the reader can see
\cite{Irene}, \cite{Norai-essay}, \cite{Norai} too:  suppose $G$ and $H$ are groups and $\iota_1(G)$ is an isomorphisc copy of $G$ and $\iota_2(H)$ is an isomorphic copy of $H$. Then the group $\eta(G,H)$ is defined as follows:
$$\eta(G, H) = \langle\iota_1(G), \iota_2(H) \ | \ ^{\iota_1(g_1)}[\iota_1(g), \iota_2(h)] = [\iota_1(^{g_1} g) , \iota_2(^{g_1} h)],$$ $$ ^{\iota_2(h_1)}[\iota_1(g), \iota_2(h)]
 = [\iota_1(^{h_1} g), \iota_2(^{h_1}h)] \hbox{ for } g, g_1 \in G, h, h_1 \in H \rangle$$ 
By \cite[Prop. 1.4]{G-H} $[\iota_1(G), \iota_2(H)] \simeq G \otimes H$ the non-abelian tensor product, hence $$\eta(G, H) \simeq ((G \otimes H) \rtimes G) \rtimes H$$
The commutator $[i_1(g), i_2(h)]$ corresponds to $g \otimes h \in G \otimes H$.
By \cite{Ellis} if $G$ and $H$ are finite $p$-groups ( resp. finite groups)  then $G \otimes H$ is a finite $p$-group ( resp. a finite group), hence $\eta(G, H)$ is a finite $p$-group (resp. a finite group). 

\iffalse{ By definition $G \wedge G$ is the quotient of $G \otimes G$ by the subgroup $\Delta(G)$ generated by $g \otimes g$ for $g \in G$. We write $g_1 \wedge g_2$ for the image of $g_1 \otimes g_2$. Note that $\Delta(G)$ is a central subgroup of $G \otimes G$. 
We will need later in the proof of Theorem C several short exact sequences or corollaries of them. There is a (central) extension  
$$ 1 \to H_2(G, \Z) \to G \wedge G \to G' \to 1$$
that sends $g_1 \wedge g_2$ to $[g_1, g_2]$.

By \cite{B-L}, \cite{B-L1} there is an exact sequence
$$ H_3(G, \mathbb{Z}) \to  \Gamma(G^{ab}) \to  G \otimes G \to  G \wedge G$$
where the map $G \otimes G \to G \wedge G$ sends $g_1 \otimes g_2$ to $g_1 \wedge g_2$ and $\Gamma(-)$ is the Whitehead quadratic functor.

By \cite[Prop. 2.2 iii]{B-F-M}   for $A$  an abelian group without 2-torsion  $$\Gamma(A) = Ker (A \otimes A \to A \wedge A)$$ 
By \cite{B-F-M} if $G^{ab}$ is finitely generated and does not have 2-torsion then the epimorphism $G \otimes G \to G \wedge G$ splits.}\fi

Suppose $P$ is a group,  $M$ and $N$ normal subgroups of $P$ such that $P = MN$. Then $M$ and $N$ act on each other via conjugation in $P$. Since  these actions are compatible we can consider the non-abelian exterior product $M \wedge N$. By definition $M \wedge N$ is the quotient of the non-abelian tensor product $M \otimes N$ by the subgroup generated by the elements $m \otimes m$ where $m \in M \cap N$. By \cite[Thm. 4.5]{B-L} or \cite[Cor. 2]{B-L1} there is an exact sequence ( we omit some low dimensional factors) 
$$H_3(P, \Z) \to H_3(P/M, \Z) \oplus H_3(P/N,\Z) \to V \to H_2(P, \Z) $$
where $V = Ker(M \wedge N \to P)$ with $M \wedge N \to P$ sending $m \wedge n$ to $[m,n]$.

\subsection{Third integral homology of a free nilpotent group}
In \cite{Ig-Orr} Igusa and Orr calculate the homology groups of free nilpotent groups $G = F / \gamma_{n}(F)$  by comparing  them with the homology groups of the Lie algebra $L$ (over $\Z$) defined by the lower central series $\{ \gamma_i(G) \}$ i.e. $L = \oplus_{1 \leq i \leq n-1}  \gamma_i(G)/ \gamma_{i+1} (G) $. By \cite[Cor. 6.5]{Ig-Orr} $H_3(G, \Z) \simeq H_3(L, \Z)$ and  the filtration of the third homology given by the spectral sequence used (the May spectral sequence) has quotients that are free abelian groups and the ranks are calculated. In particular $$H_3(G, \Z) \hbox{ is torsion-free for } G =  F / \gamma_{n}(F)$$ We note that the situation changes radically when we move to the fourth homology i.e. $H_4(G, \Z)$ could contain torsion, see \cite{Rom}.

\subsection{Profinite homology} Let $p \geq 2$ be a prime integer. For a discrete groups $G$ we denote by $\widehat{G}_p$ the pro-$p$ completion of $G$ i.e. the inverse limit of all finite $p$-group quotients of $G$. The algebra of $p$-adic integers $\mathbb{Z}_p$ is the pro-$p$ completion of $\mathbb{Z}$. We will work in this paper with both homology of discrete groups $H_i(G, - )$ and the pro-$p$ version we denote by $H_n^{cont}(\widehat{G}_p, -)$. We refer the reader to \cite{book} for more details on continious (profinite) homology. Here we will use that the profinite homology  $H_n^{cont}$ commutes with inverse limits, hence if $\varprojlim_i G_i = G_0$, $G_i$ finite group quotients ( in our case $p$-groups) of the profinite group $G_0$ ( in our case pro-$p$ group) and  $\varprojlim_j M_j = M_0$ where $M_0$ is a pro-finite ( in our case pro-$p$) $G_0$-module where the action of $G_0$ on $M_j$ factors through $G_i$ and each $M_i$ is finite (in our case $p$-finite) then by basic properties of profinite homology, see \cite{book},
$$H_n^{cont}(G_0, M_0) \simeq  \varprojlim_{j,i} H_n^{cont}(G_i, M_j) \simeq \varprojlim_{j,i} H_n(G_i, M_j)$$
where the last isomorphism comes from the fact that abstract and continious (profinite) homology coincide for finite groups $G_j$ with finite coefficients modules $M_j$.
Furthermore by properties of homology of discrete groups, see \cite{Rotman},
if $|G_j| = m_j$  and $M$ is a discrete $G_j$-module we have
$$m_j H_n(G_j, M) = 0$$

\subsection{Parafree groups} We recall that a group $P$ is parafree if it is residually nilpotent and its quotients by the terms of its lower central series are the same as those of a free group $F$ i.e. this means
that there is an isomorphism $$F / \gamma_n(F) \to P/ \gamma_n(P)$$ for each positive integer $n$, and
that these isomorphisms are compatible with each other in the obvious way.

In \cite{Mag} Magnus classified the finitely generated free groups in the class of parafree groups: 
if $P$ is an $n$–generator group ($n < \infty$) and $P$
has the same lower central sequence as a free group of rank $n$ then $P$ is free. 
In \cite{JZ-M} Jaikin-Zapirian and Morales  considered parafree groups that split as fundamental groups of graph of groups with cyclic edge groups and vertex groups that are parafree. One of the starting points of the considerations in \cite{JZ-M} is the following proposition whose proof is based on results of Lackenby \cite{Mark} and Gruenberg  \cite{Gruenberg}.

\begin{prop} \cite[Prop. 2.1]{JZ-M} \label{Jaikin}  Let $P$ be finitely generated residually nilpotent group. Then, $P$ is parafree if and
only if $\widehat{P}_p$  is a free pro-$p$ group for every prime $p$. Moreover, a parafree group is residually-$p$ (= residually $p$-finite)  for every
prime $p$. \end{prop}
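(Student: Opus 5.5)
We prove the forward implication and the residual-$p$ property by reducing to free nilpotent quotients. For the converse we compare pro-$p$ completions along a map $F\to P$ that induces an isomorphism on abelianizations. Throughout, $F$ is a free group of finite rank $d$. We use two standard facts: a finitely generated torsion-free nilpotent group (for instance $F/\gamma_n(F)$) is residually $p$ for every prime $p$ (Gruenberg), and finitely generated pro-$p$ groups are Hopfian.

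\emph{Parafree $\Rightarrow$ free pro-$p$ completion, and residually $p$.}
\begin{enumerate}
\item Assume $P$ is parafree, with compatible isomorphisms $F/\gamma_n(F)\to P/\gamma_n(P)$.
\item Every finite $p$-group is nilpotent, so every finite $p$-quotient of $P$ factors through some $P/\gamma_n(P)$. The same holds for $F$.
\item Hence $\widehat{P}_p\simeq\varprojlim_n \widehat{(P/\gamma_n(P))}_p$. By compatibility of the isomorphisms, this is isomorphic to $\varprojlim_n\widehat{(F/\gamma_n(F))}_p\simeq\widehat{F}_p$, which is free pro-$p$ of rank $d$.
\item For residual $p$-finiteness, take $1\neq g\in P$. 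Since $P$ is residually nilpotent, $g\notin\gamma_n(P)$ for some $n$.
\item $P/\gamma_n(P)\simeq F/\gamma_n(F)$ is residually $p$, so $g$ survives in some finite $p$-quotient of $P$.
\end{enumerate}

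\emph{Free pro-$p$ completions $\Rightarrow$ parafree.}
\begin{enumerate}
\item Assume $\widehat{P}_p$ is free pro-$p$ for every $p$. The pro-$p$ abelianization of $\widehat{P}_p$ equals $P^{ab}\otimes\Z_p$, since $P^{ab}$ is finitely generated. It is also $\Z_p^{d_p}$, which is torsion-free.
\item So $P^{ab}$ has no $p$-torsion for any $p$, i.e.\ $P^{ab}\simeq\Z^d$. Then $d_p=d$ for all $p$.
\item Lift a basis of $P^{ab}$ to $P$ to get $\varphi:F\to P$ inducing an isomorphism $F^{ab}\to P^{ab}$.
\item The induced map $\widehat{\varphi}_p:\widehat{F}_p\to\widehat{P}_p$ is an isomorphism on $H_1(-,\mathbb{F}_p)$. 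By the Frattini argument it is surjective.
\item Both groups are free pro-$p$ of rank $d$, so Hopficity shows $\widehat{\varphi}_p$ is an isomorphism.
\item The map $\varphi_n:F/\gamma_n(F)\to P/\gamma_n(P)$ is surjective, because a homomorphism of nilpotent groups that is surjective on abelianizations is surjective.
\item For injectivity, identify $\widehat{(P/\gamma_n(P))}_p$ with $\widehat{P}_p/\overline{\gamma_n(\widehat{P}_p)}$, and likewise for $F$. This uses step 2 of the first part together with the fact that the closed lower central series of a finitely generated pro-$p$ group is given by closures of the abstract terms.
\item With this identification, $\widehat{\varphi}_p$ induces an isomorphism $\widehat{(F/\gamma_n(F))}_p\to\widehat{(P/\gamma_n(P))}_p$.
\item Suppose $1\neq k\in\ker\varphi_n$. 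Since $F/\gamma_n(F)$ embeds in its pro-$p$ completion, the image of $k$ there is nontrivial. But that image maps to $1$ under the isomorphism of step 8, a contradiction. So $\varphi_n$ is an isomorphism.
\item These $\varphi_n$ are compatible with each other. Since $P$ is residually nilpotent, $P$ is parafree.
\end{enumerate}

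The main obstacle is step 7 of the converse: the compatibility of pro-$p$ completion with passing to $\gamma_n$-quotients. I would handle it using that closed subgroups of finite index in finitely generated pro-$p$ groups are open, so $\overline{\gamma_n}$ behaves functorially. I would then check the universal property of the quotient $\widehat{P}_p/\overline{\gamma_n(\widehat{P}_p)}$ against finite $p$-quotients of $P/\gamma_n(P)$.
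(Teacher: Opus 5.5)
Your proof is correct. The paper gives no proof of its own here. It quotes the proposition from Jaikin-Zapirain and Morales and only remarks that their proof rests on results of Lackenby and Gruenberg.

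Your argument is therefore a self-contained substitute rather than a reconstruction. Gruenberg's theorem (finitely generated torsion-free nilpotent groups are residually $p$) enters where one would expect. It gives the residual $p$-finiteness of $P$, and it embeds $F/\gamma_n(F)$ into its pro-$p$ completion in the injectivity step of the converse. In place of the Lackenby input, you use only the Frattini argument and the Hopfian property of finitely generated pro-$p$ groups to upgrade $\widehat{\varphi}_p$ to an isomorphism, and then descend to the nilpotent quotients.

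The step you flag as the main obstacle is easier than you suggest. The identification $\widehat{(P/\gamma_n(P))}_p\simeq \widehat{P}_p/\overline{\gamma_n(\widehat{P}_p)}$ follows directly from the universal property. Let $f\colon P\to Q$ be a homomorphism to a finite $p$-group that kills $\gamma_n(P)$. Its continuous extension $\widehat{P}_p\to Q$ has image $f(P)$, which has nilpotency class less than $n$. So the extension kills $\gamma_n(\widehat{P}_p)$, and since its kernel is closed, it also kills the closure. Neither the openness of finite-index subgroups nor the closedness of abstract commutator subgroups is needed.

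Your route also shows slightly more than stated. All primes are used only to force $P^{ab}$ to be torsion-free. Once that is known, freeness of $\widehat{P}_p$ for a single prime $p$ already shows that $P$ is parafree.
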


 Here we collect some  examples of parafree groups due to Baumslag \cite{Baumslag2}, the groups are given by generators and relations:

1) $\langle a,b,c \ | \ a^2 b^p c^p \rangle$ for odd prime $p$,

2)  $\langle a,b,c \ | \ a=[c^i, a][c^j,b]  \rangle$ for $i,j $ positive integers,

3)  $\langle a,b,c \ | \ a=[a^i, b^j][c,b]  \rangle$ for $i,j $ positive integers,

4)  $\langle a,b,c \ | \ a^i[b, a]= c^j  \rangle$ for $i,j $ positive integers.

\section{Proof of Theorem A}

Let $F$ be a finitely generated free group.
Consider $F$ and $\gamma_n(F)$ with mutual actions given by conjugation in $F$.
Consider the discrete group $\eta(F, \gamma_n(F))$.
Set $$\Delta (F) = \langle g \otimes g \ | \ g \in \gamma_n(F) \rangle \leq \eta(F, \gamma_n(F))$$ Then $$F \otimes \gamma_n(F) / \Delta(F) \simeq F \wedge \gamma_n(F)$$ the non-abelian exterior product. By \cite[Cor. 2]{B-L1}, \cite{Ellis1} there is a short exact sequence
$$1 \to H_3(F/ \gamma_n(F), \mathbb{Z}) \to F \wedge \gamma_n(F) \to \gamma_{n+1}(F) \to 1$$ where the map $$F \wedge \gamma_n(F) \to \gamma_{n+1}(F)$$ sends $g_1 \wedge g_2$ to $[g_1, g_2]$.

We write $\iota_1(F)$ for the copy of $F$ in $\eta(F, \gamma_n(F))$ and $\iota_2(\gamma_n(F))$ for the copy of $\gamma_n(F)$ in $\eta(F, \gamma_n(F))$. Then 
$$[\iota_1(F), \iota_2(\gamma_n(F))] \simeq F \otimes \gamma_n(F)$$

Define  $\mu(F, \gamma_n(F))$ as the subgroup of $\eta(F, \gamma_n(F))$ generated by the groups  $\iota_1(F)$ and $[\iota_1(F), \iota_2(\gamma_n(F))]$.  Note that $$\mu(F, \gamma_n(F))/ \Delta(F) \simeq (F \wedge \gamma_n(F))  \rtimes F$$
The same construction could be extended to any group $G$ i.e.  $\mu(G, \gamma_n(G))$ is the subgroup of $\eta(G, \gamma_n(G))$ generated by the groups  $\iota_1(G)$ and $[\iota_1(G), \iota_2(\gamma_n(G))]$.

Let $U$ be a $p$-finite quotient of $F$. Consider $$(U \wedge \gamma_n(U)) \rtimes U \simeq \mu(U, \gamma_n(U))/ \Delta(U) \leq \eta(U, \gamma_n(U))/ \Delta(U) $$
Since $U$ is a finite $p$-group,  $ \eta(U, \gamma_n(U))$ is a finite $p$-group, hence 
$(U \wedge \gamma_n(U)) \rtimes U $
is a finite $p$-group. 

Let $G = F/ \gamma_n(F)$. Set $\overline{\gamma_n}: = \overline{\gamma_n(\widehat{F}_p)}$, where $ \widehat{F}_p$ is the pro-$p$ completion of $F$ and we use overlining for closure in the pro-$p$ group $\widehat{F}_p$. Note that $\widehat{F}_p/ \overline{\gamma_n}$ is the maximal pro-$p$ quotient of $\widehat{F}_p$ that is nilpotent of nilpotency class $n-1$. Define 
$$\widehat{F}_p \widehat{\wedge} \overline{\gamma_n} := \varprojlim_U U \wedge \gamma_n(U)$$
where the inverse limit is over all $p$-finite group quotients $U$ of $F$, thus
$$\widehat{F}_p \simeq \varprojlim_U U$$
There is a homomorphism of pro-$p$ groups
$$\widehat{F}_p \widehat{\wedge} \overline{\gamma_n} \to \overline{\gamma_{n+1}}$$
induced by the homomorphism
$$F/ U \wedge \gamma_n(F/U) \to \gamma_{n+1}(F/U)$$
that sends $a \wedge b$  to  $[a,b]$ for $a \in F/U, b \in \gamma_n(F/U)$. 

\begin{lemma} \label{L1} Let $p \geq 2$ be a prime integer. Then ${\mathcal{M}}: = Ker(\widehat{F}_p \widehat{\wedge} \overline{\gamma_n} \to \overline{\gamma_{n+1}})$ is a pro-$p$ subgroup of $H_3^{cont}( \widehat{G}_p, \mathbb{Z}_p)$.
\end{lemma}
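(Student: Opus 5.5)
We work at the finite level first and then pass to the inverse limit. For each $p$-finite quotient $U$ of $F$, take the exact sequence of \cite[Thm. 4.5]{B-L} from Section \ref{prelim-exact} with $P=U$, $M=U$, $N=\gamma_n(U)$. Then $P/M=1$ has trivial homology and $P/N=U/\gamma_n(U)$, so we get
$$H_3(U,\Z)\to H_3(U/\gamma_n(U),\Z)\to V_U\to H_2(U,\Z),$$
where $V_U=Ker(U\wedge\gamma_n(U)\to\gamma_{n+1}(U))$. As in the short exact sequence displayed for $F$, the map $H_3(U/\gamma_n(U),\Z)\to V_U$ is the Brown--Loday map. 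It is natural in $U$, so the sequences form an inverse system indexed by the $p$-finite quotients $U$ of $F$. By the Remark in the profinite homology subsection, every term is a finite $p$-group: the groups are finite $p$-groups and coefficients can be taken $\Z$ or $\Z/p^k$, with homology annihilated by $|U|$. Hence inverse limits of these sequences remain exact.

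Passing to the limit, $\varprojlim_U V_U=\mathcal{M}$ is a closed subgroup of the pro-$p$ group $\widehat{F}_p\widehat{\wedge}\overline{\gamma_n}$, hence pro-$p$. For the other terms we use the isomorphism $H_n^{cont}(G_0,M_0)\simeq\varprojlim H_n(G_i,M_j)$ recalled above. First, $\varprojlim_U H_3(U,\Z)$ and $\varprojlim_U H_2(U,\Z)$ are $H_3^{cont}(\widehat{F}_p,\Z_p)$ and $H_2^{cont}(\widehat{F}_p,\Z_p)$. Second, $\varprojlim_U H_3(U/\gamma_n(U),\Z)=H_3^{cont}(\widehat{G}_p,\Z_p)$, because the groups $U/\gamma_n(U)$ run cofinally over the finite $p$-quotients of $\widehat{G}_p=\widehat{F}_p/\overline{\gamma_n}$.

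By Proposition \ref{Jaikin}, or directly since $F$ is free, $\widehat{F}_p$ is a free pro-$p$ group, so $H_2^{cont}(\widehat{F}_p,\Z_p)=0=H_3^{cont}(\widehat{F}_p,\Z_p)$. The limit sequence therefore gives an isomorphism $H_3^{cont}(\widehat{G}_p,\Z_p)\simeq\mathcal{M}$. In particular $\mathcal{M}$ is, via this map, a pro-$p$ subgroup (indeed all) of $H_3^{cont}(\widehat{G}_p,\Z_p)$, compatibly with the inclusion $H_3(G,\Z)\to F\wedge\gamma_n(F)$ coming from the discrete sequence.

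The main obstacle is justifying exactness of the inverse limit and the identification of the limits with continuous homology. The integral homology groups $H_i(U,\Z)$ of finite $p$-groups are finite $p$-groups, so Mittag-Leffler holds and $\varprojlim$ is exact on these systems. The subtle point is that $\varprojlim_U H_i(U,\Z)$ should equal $H_i^{cont}(\widehat{F}_p,\Z_p)$ for $i\ge1$. We would handle this by writing $H_i(U,\Z)\simeq\varprojlim_k H_i(U,\Z/p^k)$ for $i\ge1$, which stabilizes since the group is $p$-finite and killed by $|U|$, and then applying the double-limit formula. If this is troublesome, a fallback is to avoid continuous homology of $\widehat{F}_p$ altogether. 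Since $H_2(U,\Z)$ is finite, the kernel of $V_U\to H_2(U,\Z)$ surjects onto image-compatible subgroups. We would then argue that the image of $\varprojlim V_U$ in $\varprojlim H_2(U,\Z)$ vanishes by cofinality: any class in $H_2(U,\Z)$ dies in $H_2(U',\Z)$ for a larger free-like quotient. This uses $H_2^{cont}(\widehat{F}_p,\Z_p)=0$ expressed as $\varprojlim_U H_2(U,\Z)=0$, which still yields that $\mathcal{M}$ is a quotient, and with the same argument for $H_3$ a subgroup, of $H_3^{cont}(\widehat{G}_p,\Z_p)$.
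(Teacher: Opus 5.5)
Your proposal is correct and follows the paper's proof. It uses the same Brown--Loday sequence $H_3(U,\Z)\to H_3(U/\gamma_n(U),\Z)\to V_U\to H_2(U,\Z)$ for each $p$-finite quotient $U$, exactness of $\varprojlim$ on finite groups, and $\varprojlim_U H_s(U,\Z)=0$ for $s=2,3$ from freeness of $\widehat{F}_p$, giving $\mathcal{M}\simeq\varprojlim_U H_3(U/\gamma_n(U),\Z)$.

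The only difference is that you identify this limit with all of $H_3^{cont}(\widehat{G}_p,\Z_p)$, whereas the paper needs only the injection induced by $H_3(\cdot,\Z)\hookrightarrow H_3(\cdot,\Z/p^j\Z)$ for large $j$. Your identification is true, but not because $H_i(U,\Z/p^k\Z)$ ``stabilizes'': for $i\geq 2$ it keeps the summand $Tor_1^{\Z}(H_{i-1}(U,\Z),\Z/p^k\Z)$, whose transition maps are multiplication by $p$. The correct reason is that this Tor part has trivial inverse limit. Your fallback paragraph is unnecessary.
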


\begin{proof} 

Let $K$ be a normal subgroup of $F$ such that $U = F/ K$ is a finite $p$-group and denote $R = \gamma_n(F)$. Consider the last exact sequence from section \ref{prelim-exact} for $P = M = F/ K, N = RK/ K$
$$H_3(F/K, \Z) \to  H_3(F/ RK,\Z) \to V_K \to H_2(F/K, \Z) $$
where $V_K : = Ker(F/K \wedge RK/K \to F/K)$ with $F/K \wedge RK/K \to F/K $ sending $m \wedge n$ to $[m,n]$ for $m \in F/K, n \in RK/K$. All groups in the above exact sequence are finite $p$-groups, but since $ \varprojlim^1$ applied to a tower of finite groups is trivial i.e. zero, applying inverse limit over all normal subgroups $K$ of $F$ with $F/ K$ $p$-finite gives an exact sequence
\begin{equation} \label{exact1}  \varprojlim_K H_3(F/K, \Z) \to \varprojlim_K H_3(F/ RK,\Z) \to \varprojlim_K V_K \to \varprojlim_K H_2(F/K, \Z) \end{equation}

Let $m_K =   | F/ K| = p^{i_K}$ and $n_K =  |F/ RK| = p^{j_K}$,  hence $m_KH_i(F/K, \mathbb{Z}) = 0 =  n_k H_3(F/ RK, \mathbb{Z})$ for $i \geq 1$. Then by the universal coefficients theorem there are exact sequences of abelian groups
$$0 \to H_s(F/K, \mathbb{Z}) \to H_s(F/K, \mathbb{Z}/ p^{i} \mathbb{Z}) \hbox{ for } i \geq i_k, s \in \{ 2,3 \}$$
and
$$0 \to H_3(F/ RK, \mathbb{Z}) \to H_3(F/ RK, \mathbb{Z}/ p^{j} \mathbb{Z}) \hbox{ for } j \geq j_k$$
Since inverse limit is left exact functor applying inverse limit over all normal subgroups $K$ of $F$ with $F/ K$ $p$-finite gives  exact sequences
$$0 \to \varprojlim_K  H_s(F/K, \mathbb{Z}) \to \varprojlim_{K, i \geq i_K}  H_s(F/K, \mathbb{Z}/ p^i \mathbb{Z}) \hbox{ for } s \in \{ 2,3 \}$$
and
$$0 \to \varprojlim_K  H_3(F/ RK, \mathbb{Z}) \to  \varprojlim_{K, j \geq j_K}  H_3(F/ RK, \mathbb{Z}/ p^j \mathbb{Z})$$
Since $\widehat{F}_p$ is a free pro-$p$ group  
$$\varprojlim_{K, i \geq i_k}  H_s(F/K, \mathbb{Z}/ p^i \mathbb{Z})  \simeq H_s^{cont}(\varprojlim_K  F/K,\varprojlim_{i \geq i_k}  \mathbb{Z}/ p^i \mathbb{Z}) \simeq  H_s^{cont} (\widehat{F}_p, {\mathbb{Z}}_p) = 0 \hbox{ for } s \in \{ 2,3 \}$$
Hence
$$ \varprojlim_K  H_s(F/K, \mathbb{Z})  = 0 \hbox{ for } s \in \{ 2,3 \}$$
 Recall  $F/ R = G$, hence $\varprojlim_K F/ RK \simeq \widehat{G}_p$, hence
$$\varprojlim_{K, j \geq j_K} H_3(F/ RK, \mathbb{Z}/ p^j \mathbb{Z}) \simeq  H_3^{cont}( \varprojlim_K F/ RK, \varprojlim_{j \geq j_k} \mathbb{Z}/ p^j \mathbb{Z}) \simeq H_3^{cont}(\widehat{G}_p, \mathbb{Z}_p)$$
Combining with (\ref{exact1}) we get an isomorphism
$$\varprojlim_K V_K \simeq \varprojlim_K H_3(F/ RK,\Z)$$
and $$ \varprojlim_{K} H_3(F/ RK,\Z)\hbox{ is a subgroup of } \varprojlim_{K, j \geq j_k}  H_3(F/ RK, \mathbb{Z}/ p^j \mathbb{Z}) \simeq H_3^{cont}(\widehat{G}_p, \mathbb{Z}_p)$$

Finally consider the short exact sequence of  groups
$$1 \to V_K \to F/K \wedge RK/K \to [F, R]K/K \to 1$$
for $F/K$ finite.
Since all  the groups in the exact sequence are finite by applying inverse limit over all normal subgroups $K$ of $F$ such that $F/ K$ is  $p$-finite i.e. a finite $p$-group, we get an exact sequence
$$
1 \to  \varprojlim_K V_K\to  \varprojlim_K  F/K \wedge RK/K  \to  \varprojlim_K [F, R]K/K \to 1$$
but actually we only need that $
1 \to  \varprojlim_K V_K\to  \varprojlim_K  F/K \wedge RK/K  \to  \varprojlim_K [F, R]K/K $ is exact and this follow from the fact that inverse limit is left exact.
We identify $\widehat{F}_p \widehat{\wedge} \overline{\gamma_n}$ with $ \varprojlim_K  F/K \wedge RK/K$ and $\overline{\gamma_{n+1}}$ with $\varprojlim_K [F, R]K/K $, so
$$ \varprojlim_K V_K \simeq Ker ( \widehat{F}_p \widehat{\wedge} \overline{\gamma_n} \to \overline{\gamma_{n+1}})$$
\end{proof}

\begin{lemma} \label{L4} Let $p \geq 2$ be a prime integer.
Let $ 1 \to C \to G \to K \to 1$ be a short exact sequence of groups, where $C$ is finitely generated abelian and $G$ is residually $p$-finite. Then there is an induced short exact sequence $1 \to \widehat{C}_p \to \widehat{G}_p \to \widehat{K}_p \to 1$.
\end{lemma}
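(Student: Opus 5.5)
Pro-$p$ completion is right exact, so $\widehat{G}_p \to \widehat{K}_p$ is surjective with kernel the closure $\overline{C}$ of the image of $C$ in $\widehat{G}_p$. The plan is therefore to show that the natural map $\widehat{C}_p \to \widehat{G}_p$ is injective with image $\overline{C}$. Concretely, $\widehat{K}_p$ is the inverse limit of the finite $p$-quotients $G/N$ with $N \supseteq C$, and $\widehat{G}_p/\overline{C}$ has the same universal property with respect to maps to finite $p$-groups killing $C$. So the whole content lies in comparing topologies on $C$: the full pro-$p$ topology on $C$ (all subgroups of $p$-power index) and the topology induced from $G$ (subgroups $C \cap N$ with $N \trianglelefteq G$ and $G/N$ a finite $p$-group). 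We need these two topologies to coincide. The induced topology is coarser, and it is Hausdorff because $G$ is residually $p$-finite.

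The key step is this: for each $m$, find $N \trianglelefteq G$ of $p$-power index with $C \cap N \subseteq C^{p^m}$. Since $C$ is finitely generated abelian, the subgroups $C^{p^m}$ are cofinal among the $p$-power index subgroups of $C$. This is where I expect the main obstacle, and also where the hypotheses enter.

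Write $C = T \times \Z^r$ with $T$ finite. By residual $p$-finiteness of $G$, the torsion part $T$ is a $p$-group, and there is a normal $N_0$ of $p$-power index with $N_0 \cap T = 1$. Replacing $G$ by $N_0$ would change $K$, so instead I will use $N_0$ only to separate torsion, and handle the free part as follows. The subgroup $C^{p^m}$ is characteristic in $C$, hence normal in $G$. First try to show that $G/C^{p^m}$ is residually $p$-finite on the finite $p$-group $C/C^{p^m}$. Conjugation gives an action of $G$ on $C/C^{p^m}$ through a finite group $A \le \mathrm{Aut}(C/C^{p^m})$. Replace $G$ by the finite-index normal subgroup $G_1$ acting trivially modulo $p$ on $C/C^{p}$; then $G_1$ acts on $C/C^{p^m}$ through a $p$-group, since the congruence kernel is a $p$-group. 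One needs $G_1$ of $p$-power index, and the trick is to use residual $p$-finiteness to choose $G_1 = N$ with $G/N$ a $p$-group and $N$ acting unipotently. Indeed, the image of $G$ in $\mathrm{Aut}(C/C^p)$ must be a $p$-group: if $g$ acted with order prime to $p$ nontrivially on $C/C^p$, then passing to a $p$-quotient of $G$ in which $C$ embeds sufficiently faithfully gives a contradiction, since in a finite $p$-group $Q$ a normal subgroup $\bar C$ has $Q$ acting on $\bar C/\bar C^p$ through a $p$-group.

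Then $C/C^{p^m}$ is a finite $p$-group on which $G$ acts through a $p$-group. Form $N' = \ker(G \to \mathrm{Aut}(C/C^{p^m}))$, which has $p$-power index. The extension $1 \to C/C^{p^m} \to N'/C^{p^m} \to \cdots$ is now central. Pick a finite $p$-quotient $G/M$ with $M \cap C \subseteq C^{p^m}$, using the Hausdorff induced topology on the finite-rank group $C$ together with the fact, from the previous step, that induced neighbourhoods can be refined to $C^{p^m}$. Taking the intersection $M \cap N'$ yields the required $N$. With the topologies identified, $\widehat{C}_p = \varprojlim C/(C\cap N) \hookrightarrow \varprojlim G/N = \widehat{G}_p$ with image $\overline{C}$, and exactness of inverse limits of finite groups gives $1 \to \widehat{C}_p \to \widehat{G}_p \to \widehat{K}_p \to 1$.
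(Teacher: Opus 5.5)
\textbf{There is a genuine gap, and it cannot be closed: the statement is false as written.}

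Your reduction is the same as the paper's. By right exactness, everything comes down to showing that the subgroups $C\cap N$ ($N\trianglelefteq G$ of $p$-power index) are cofinal with the $C^{p^m}$. But you never prove this key step. Your claim that $G$ acts on $C/C^p$ through a $p$-group is justified by passing to ``a $p$-quotient of $G$ in which $C$ embeds sufficiently faithfully'', i.e.\ one with $N\cap C\subseteq C^p$. That is exactly the key step for $m=1$. Your last paragraph then simply picks $M$ with $M\cap C\subseteq C^{p^m}$, appealing to Hausdorffness and ``the previous step''. Hausdorffness gives nothing here. For $\xi\in\Z_p\setminus\Q$, the subgroups $\{(x,y)\in\Z^2 : x+\xi y\in p^k\Z_p\}$ have $p$-power index and trivial intersection, yet none lies in $p\Z^2$, because the quotients are cyclic. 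Nor does a central or unipotent action on $C/C^{p^m}$ by itself produce a $p$-quotient faithful on it. The paper's proof breaks at the same point. From $C^{p^{k_i}}\subseteq C_i$ and $\bigcap_i C_i=1$ it obtains only Hausdorffness (those inclusions point the wrong way for cofinality). It then asserts that finite generation of $C$ upgrades this to the full pro-$p$ topology.

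\textbf{A counterexample with $p$ odd.} Let $G=\Z^2\rtimes_A\Z$ with $A=\begin{pmatrix}0&1\\1&p\end{pmatrix}$.
\begin{itemize}
\item The polynomial $x^2-px-1$ is irreducible over $\Q$ and has simple roots $\lambda\equiv 1$, $\mu\equiv-1\pmod p$ in $\Z_p$. Let $E$ be the $\mu$-eigenline in $\Z_p^2$ and set $D_k=\Z^2\cap(E+p^k\Z_p^2)$.
\item The $D_k$ are $A$-invariant, and $\bigcap_k D_k=\Z^2\cap E=0$.
\item $A$ acts on $\Z^2/D_k\cong\Z/p^k$ as multiplication by $\lambda$. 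So $G/D_k$ maps onto the $p$-group $\Z/p^k\rtimes\Z/p^{k-1}$ with $\Z^2/D_k$ embedded. Hence $G$ is residually $p$-finite.
\item But $A \bmod p$ is the swap matrix, of order $2$. So whenever $G/N$ is a $p$-group and $t^{p^j}\in N$, we get $N\cap\Z^2\supseteq (A^{p^j}-I)\Z^2\not\subseteq p\Z^2$.
\end{itemize}
Thus $\widehat{C}_p=\Z_p^2\to\widehat{G}_p$ is not injective (the closure of $C$ is $\cong\Z_p$). Also the image of $G$ in $\mathrm{Aut}(C/C^p)$ is not a $p$-group, contrary to your claim.

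\textbf{Centrality does not rescue it, for any $p$.} Choose $a_i\in\Z$ with $a_i\equiv-\xi\pmod{p^i}$. Let $G$ be the class-$2$ nilpotent group generated by a central $C=\Z^2$ and elements $e_i,f_i$ ($i\geq 1$), whose only nontrivial commutators are $[e_i,f_i]=(a_i,1)$.
\begin{itemize}
\item Modulo $D_m=\{c\in C: c_1+\xi c_2\in p^m\Z_p\}$, the $e_i,f_i$ with $i\geq m$ generate a central subgroup meeting $C/D_m$ trivially.
\item Factoring it out leaves a finitely generated nilpotent group containing $C/D_m\cong\Z/p^m$. Since $G/C$ is free abelian, $G$ is residually $p$-finite.
\item Yet in any finite quotient $G/N$ we have $e_iN=e_jN$ for some $i\neq j$. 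Hence $(a_i,1)=[e_ie_j^{-1},f_i]\in N\cap C$, while $(a_i,1)\notin pC$.
\end{itemize}

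So an extra hypothesis is genuinely needed. One example is $G$ finitely generated nilpotent, as in Lemma~\ref{L2}, where the conclusion follows from subgroup separability. For the use in Theorem~\ref{Main}, the comparison of topologies on $A$ has to come from elsewhere, for instance the embedding of Lemma~\ref{L3}.
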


\begin{proof}
Note that the pro-$p$ completion of discrete groups is a right exact functor, so it remains to prove that the pro-$p$ topology of $G$ induces on $C$  the pro-$p$ topology of $C$. Let $\{ C_i \}$ be subgroups of $C$ that define the topology of $C$ induced by the pro-p topology of $G$. Since $G$ is residually $p$-finite $\cap_i C_i = 1$. For every $i$ there is  a $p$-power $p^{k_i}$ such that $C^{p^{k_i}} \subseteq C_i$, hence $\cap_i C^{p^{k_i}} = 1$. This together with $C$ is finitely generated abelian implies that  $\{ C_i \}$ defines the pro-$p$ topology of $C$ i.e. $\widehat{C}_p \simeq \varprojlim_i C/ C_i$. 
\end{proof}
\begin{lemma}  \label{BK1}   Let $G$ be  a discrete group with $H_3(G, \mathbb{Z})$ finitely generated, $p$ be a prime integer and  the map $H_3(G, \mathbb{Z}/ p^j \mathbb{Z}) \to H_3^{cont}(\widehat{G}_p, \mathbb{Z}/ p^j \mathbb{Z})$, induced by the canonical map $G \to \widehat{G}_p$, be an isomorphism  for every integer $j \geq 1$. Then $Ker(H_3(G, \mathbb{Z}) \to H_3^{cont}(\widehat{G}_p, {\mathbb{Z}_p}))$ is a   finite group of order coprime to $p$.
\end{lemma}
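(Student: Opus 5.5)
We compare the integral map with the mod $p^j$ maps and pass to the inverse limit. Write $A = H_3(G,\Z)$, which is finitely generated, so $A \simeq \Z^r \oplus T$ with $T$ finite. Put $B_j = H_3^{cont}(\widehat{G}_p, \Z/p^j\Z)$. Since $\Z_p = \varprojlim_j \Z/p^j\Z$ and continuous homology commutes with inverse limits of finite coefficient modules (as recalled in the subsection on profinite homology), we have $H_3^{cont}(\widehat{G}_p,\Z_p) \simeq \varprojlim_j B_j$. The map $\phi: A \to H_3^{cont}(\widehat{G}_p,\Z_p)$ is the compatible family of composites
$$A = H_3(G,\Z) \to H_3(G,\Z/p^j\Z) \xrightarrow{\ \simeq\ } B_j,$$
where the first arrow is reduction of coefficients and the second is the isomorphism assumed in the hypothesis. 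Hence $Ker(\phi) = \bigcap_j Ker\big(H_3(G,\Z)\to H_3(G,\Z/p^j\Z)\big)$.

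Next we use the universal coefficient theorem for discrete groups:
$$0 \to H_3(G,\Z)\otimes \Z/p^j\Z \to H_3(G,\Z/p^j\Z) \to Tor(H_2(G,\Z),\Z/p^j\Z)\to 0,$$
where the first map is induced by reduction mod $p^j$. Its kernel on $A$ is therefore exactly $p^jA$, and so $Ker(\phi) = \bigcap_j p^j A$.

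Finally we compute $\bigcap_j p^jA$ for a finitely generated abelian group $A$. On the free part, $\bigcap_j p^j\Z^r = 0$. On the torsion part, write $T = T_p \oplus T_{p'}$. Then $p^jT_p = 0$ for large $j$, while $p^jT_{p'} = T_{p'}$ for every $j$. Thus
$$\bigcap_j p^jA = T_{p'},$$
which is finite of order coprime to $p$, as claimed.

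The main point needing care is the first step: identifying $\phi$ with the inverse limit of the mod $p^j$ reductions. Concretely, one must check that the natural map $H_3(G,\Z)\to H_3^{cont}(\widehat{G}_p,\Z_p)$ is compatible with the projections $\Z_p\to\Z/p^j\Z$ and with the canonical comparison maps. This follows from naturality of homology in both the group and the coefficients, together with the inverse-limit description of $H_3^{cont}(\widehat{G}_p,\Z_p)$. The remaining steps are routine algebra.
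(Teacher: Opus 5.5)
Your proof is correct and follows the paper's argument. Like the paper, you combine the universal coefficient injection $H_3(G,\Z)\otimes\Z/p^j\Z \to H_3(G,\Z/p^j\Z)$ with the hypothesised isomorphisms to identify the kernel with $\bigcap_j p^j H_3(G,\Z)$; the paper passes that injection to the inverse limit rather than intersecting the componentwise kernels, which is equivalent, and your explicit identification of the intersection as the $p'$-torsion subgroup $T_{p'}$ is a slightly sharper form of its final step.
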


\begin{proof}   By the universal coeficients theorem there is an exact sequence
    $0 \to $ $ H_3(G, \mathbb{Z}) \otimes_{\mathbb{Z}} \mathbb{Z}/ p^j \mathbb{Z} $ $ \to H_3(G,  \mathbb{Z}/ p^j \mathbb{Z}) $. Taking inverse limit over all $j \geq 1$ we get an exact sequence
    $$\label{eq111} 0 \to \varprojlim_j H_3(G, \mathbb{Z}) \otimes_{\mathbb{Z}} \mathbb{Z}/ p^j \mathbb{Z} \to \varprojlim_j H_3(G,  \mathbb{Z}/ p^j \mathbb{Z}) $$
Note that
    $$ \label{eq222}  \varprojlim_j H_3(G,  \mathbb{Z}/ p^j \mathbb{Z}) \simeq \varprojlim_j H_3^{cont}(\widehat{G}_p,  \mathbb{Z}/ p^j \mathbb{Z}) \simeq H_3^{cont}(\widehat{G}_p, \varprojlim_j \mathbb{Z}/ p^j \mathbb{Z}) \simeq H_3^{cont}(\widehat{G}_p,{\mathbb{Z}}_p). 
    $$
  The map
   $ H_3(G, \mathbb{Z}) \to \varprojlim_j H_3(G, \mathbb{Z}) \otimes_{\mathbb{Z}} \mathbb{Z}/ p^j \mathbb{Z}$ has kernel $ \cap_j p^j H_3(G, \mathbb{Z})$, hence 
   $$ \cap_j p^j H_3(G, \mathbb{Z}) = Ker(H_3(G, \mathbb{Z}) \to H_3^{cont}(\widehat{G}_p, {\mathbb{Z}_p}))$$
   Finally since $H_3(G, \mathbb{Z}) $ is a finitely generated abelian group, hence a direct sum of cyclic group,   
   $\cap_j p^j H_3(G, \mathbb{Z})$ is a $p'$-finite group. 
 \end{proof}

\begin{lemma} \label{L2}  Let $G = F/ \gamma_n(F)$ be the free nilpotent group of class $n-1$ and  $p \geq 2$ be a prime integer. Then the  map $H_3(G, \mathbb{Z}) \to H_3^{cont}(\widehat{G}_p, \mathbb{Z}_p)$, induced by the canonical map $G \to \widehat{G}_p$, is injective. \end{lemma}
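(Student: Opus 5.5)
The plan is to apply Lemma \ref{BK1} to $G = F/\gamma_n(F)$ and then use torsion-freeness of $H_3(G,\Z)$ from the Igusa--Orr computation. Since $G$ is finitely generated nilpotent, it is polycyclic, so all its integral homology groups are finitely generated; in particular $H_3(G,\Z)$ is finitely generated. If the hypotheses of Lemma \ref{BK1} hold, the kernel of $H_3(G,\Z)\to H_3^{cont}(\widehat{G}_p,\Z_p)$ is a finite group. By \cite[Cor. 6.5]{Ig-Orr}, $H_3(G,\Z)$ is torsion-free, so this finite kernel is trivial and the map is injective. Thus everything reduces to checking that
\[
H_3(G,\Z/p^j\Z)\to H_3^{cont}(\widehat{G}_p,\Z/p^j\Z)
\]
is an isomorphism for every $j\ge 1$. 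This is the main step.

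For this I would show that finitely generated nilpotent groups are ``$p$-good'' in the sense of Serre, for finite $p$-primary coefficients. I would argue by induction on the Hirsch length, or along a central series. Pick a central series
\[
1 = N_0 \le N_1 \le \dots \le N_k = G
\]
with cyclic factors; since $G$ is free nilpotent one can even take torsion-free factors refining the lower central series. For a central extension
\[
1\to C\to G\to Q\to 1
\]
with $C$ finitely generated abelian, Lemma \ref{L4} gives an exact sequence
\[
1\to \widehat{C}_p\to \widehat{G}_p\to \widehat{Q}_p\to 1.
\]
Lemma \ref{L4} applies because finitely generated nilpotent groups, in particular $G$ and its quotients, are residually $p$-finite once we work modulo torsion, and here the torsion-free choice makes $G$ residually $p$ for all $p$. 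The canonical maps then give a morphism from the Lyndon--Hochschild--Serre spectral sequence of the discrete extension, with coefficients $\Z/p^j\Z$, to the continuous one.

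The $E^2$-terms are $H_a(Q, H_b(C,\Z/p^j))$ and $H_a^{cont}(\widehat{Q}_p, H_b^{cont}(\widehat{C}_p,\Z/p^j))$. For $C\cong\Z$ one has $H_b(C,\Z/p^j)\cong H_b^{cont}(\Z_p,\Z/p^j)$, both being $\Z/p^j$ in degrees $0,1$ and zero above. The coefficient modules are finite, and they are trivial modules since the extension is central. By the inductive hypothesis applied to $Q$, with finite trivial coefficients obtained by filtering $\Z/p^j$, the $E^2$-pages are isomorphic. Comparison then gives isomorphisms on the abutments in all degrees, in particular degree $3$.

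I expect this induction to be the main obstacle. One must carry the statement in all degrees, at least up to $4$, and for all finite $p$-primary trivial modules. One must also ensure that the maps of spectral sequences are compatible, which is standard naturality. As a base-case check, $\Z\to\Z_p$ induces isomorphisms on mod $p^j$ homology.
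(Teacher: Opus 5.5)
Your proposal is correct and follows the paper's proof. You reduce, via Lemma \ref{BK1} and the Igusa--Orr torsion-freeness of $H_3(G,\mathbb{Z})$, to showing $H_3(G,\mathbb{Z}/p^j\mathbb{Z})\to H_3^{cont}(\widehat{G}_p,\mathbb{Z}/p^j\mathbb{Z})$ is an isomorphism, and you prove that by induction using Lemma \ref{L4} and a comparison of the discrete and pro-$p$ Lyndon--Hochschild--Serre spectral sequences of central extensions. The only difference is cosmetic: the paper inducts on the nilpotency class with the free abelian centre $\gamma_{n-1}(F)/\gamma_n(F)$ and base case $\mathbb{Z}^s$, while you refine to infinite cyclic central factors with torsion-free quotients (so Lemma \ref{L4} applies), which makes the fibre comparison immediate.
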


\begin{proof} By Lemma \ref{BK1} together with the fact that $H_3(G, \mathbb{Z})$ is torsion-free \cite{Ig-Orr}, it suffices to show that \begin{equation} \label{p-good} H_3(G,  \mathbb{Z}/ p^j \mathbb{Z}) \to H_3^{cont}(\widehat{G}_p,  \mathbb{Z}/ p^j \mathbb{Z})\end{equation} is an isomorphism, where $H_3^{cont}(\widehat{G}_p,  \mathbb{Z}/ p^j \mathbb{Z})$ is the pro-p homology. Note that (\ref{p-good})  is not an isomorphism if $G$ is infinitely generated free countable nilpotent group, since in this case $H_3(G,  \mathbb{Z}/ p^j \mathbb{Z}) $ is countable and $H_3^{cont}(\widehat{G}_p,  \mathbb{Z}/ p^j \mathbb{Z})$ is not countable.

We can use induction on the nilpotency class of $G$ to show that $H_3(G,  \mathbb{Z}/ p^j \mathbb{Z}) \to H_3^{cont}(\widehat{G}_p,  \mathbb{Z}/ p^j \mathbb{Z})$ is an isomorphism for all $i$. The base of the induction is when $G$ is  finitely generated, free abelian i.e.  $G \simeq \mathbb{Z}^s$, then   $$\mathbb{Z}^{{s}\choose{3}} \simeq \wedge^3 G \simeq H_3(G,  \mathbb{Z}) \hbox{ and   }{\mathbb{Z}_p}^{{s}\choose{3}} \simeq \widehat{\wedge} ^3 \widehat{G}_p \simeq  H_3^{cont}(\widehat{G}_p, \mathbb{Z}_p)$$ and the map $$\mathbb{Z}^{{s}\choose{3}} \simeq H_3(G, \mathbb{Z}) \to {\mathbb{Z}_p}^{{s}\choose{3}} \simeq H_3^{cont}(\widehat{G}_p, \mathbb{Z}_p)$$ is actually $- \otimes_{\mathbb{Z}} {\mathbb{Z}}_p$.  Then the map $$H_3(G, \mathbb{Z}) \otimes_{\mathbb{Z}}  \mathbb{Z}/ p^j \mathbb{Z} \to H_3^{cont}(\widehat{G}_p, \mathbb{Z}_p) \widehat{\otimes}_{\mathbb{Z}_p}  \mathbb{Z}/ p^j \mathbb{Z}$$ induced by $G \to \widehat{G}_p$, is an isomorphism.
By the universal coefficients theorem there are natural exact sequences
$$0 \to H_3(G,  \mathbb{Z}) \otimes_{\mathbb{Z}}  \mathbb{Z}/ p^j \mathbb{Z} \to H_3(G,  \mathbb{Z}/ p^j \mathbb{Z}) \to Tor^{\mathbb{Z}}_1(H_{2}(G, \mathbb{Z}),  \mathbb{Z}/ p^j \mathbb{Z}) \to 0$$ and 
$$0 \to H_3^{cont}(\widehat{G}_p, \mathbb{Z}_p) \widehat{\otimes}_{\mathbb{Z}_p}  \mathbb{Z}/ p^j \mathbb{Z} \to H_3^{cont}(\widehat{G}_p,  \mathbb{Z}/ p^j \mathbb{Z}) \to Tor^{\mathbb{Z}_p}_1(H_{2}^{cont}(\widehat{G}_p, \mathbb{Z}_p),  \mathbb{Z}/ p^j \mathbb{Z}) \to 0$$
Some explanation is required why we can apply the universal coefficients theorem for the pro-$p$ group $\widehat{G_p}$. The point is that \iffalse{$\widehat{G}_p$ is of homological type $FP_{\infty}$, so all homology groups
$ H_s^{cont}(\widehat{G}_p, \mathbb{Z}_p)$ are finitely generated $\Z_p$-modules}\fi $\mathbb{Z}_p$ is a PID and $M \widehat{\otimes}_{\Z_p} (\Z/ p^j \Z) \simeq M / p^j M \simeq M \otimes_{\Z_p} (\Z/ p^j \Z)$ for every abelian profinite group $M$.

Since $H_{2}(G, \mathbb{Z})$ is a finitely generated torsion-free abelian group, it is a free $\mathbb{Z}$-module, so $Tor^{\mathbb{Z}}_1(H_{2}(G, \mathbb{Z}),  \mathbb{Z}/ p^j \mathbb{Z}) = 0$. Similarly $Tor^{\mathbb{Z}_p}_1(H_{2}(\widehat{G}_p, \mathbb{Z}_p),  \mathbb{Z}/ p^j \mathbb{Z}) = 0$. This completes the  base of the induction.

There is a central extension $$ 1 \to C \to G \to K \to 1$$ where $G = F/ \gamma_n(F)$, $C = \gamma_{n-1}(F)/ \gamma_n(F)$. Since $F$ is finitely generated, $C$ is a finitely generated abelian group and we can apply
 Lemma \ref{L4}, so there is a central extension $$ 1 \to \widehat{C}_p \to \widehat{G}_p \to \widehat{K}_p \to 1$$

Consider the Lyndon-Hochschild-Serre spectral sequence $$E_{i,k}^2 = H_i(K, H_k(C,  \mathbb{Z}/ p^j \mathbb{Z})) $$ that converges to $H_{i+k} (G,  \mathbb{Z}/ p^j \mathbb{Z})$. Note that $C$ is a central subgroup of $G$, hence $K$ acts trivially on $H_k(C,  \mathbb{Z}/ p^j \mathbb{Z})$. 

Consider the pro-p 
 Lyndon-Hochschild-Serre spectral sequence $$\widetilde{E}_{i,k}^2 =  H_i^{cont}(\widehat{K}_p, H_k^{cont}(\widehat{C}_p,  \mathbb{Z}/ p^j \mathbb{Z}))$$ that converges to $H_{i+k}^{cont} (\widehat{G}_p,  \mathbb{Z}/ p^j \mathbb{Z})$. 
 Note that $\widehat{C}_p$ is a central subgroup of $\widehat{G}_p$, hence $\widehat{K}_p$ acts trivially on $H_k(\widehat{C}_p,  \mathbb{Z}/ p^j \mathbb{Z})$. 
 
  By induction the canonical map $H_i(K,  \mathbb{Z}/ p^s \mathbb{Z}) \to  H_i^{cont}(\widehat{K}_p,  \mathbb{Z}/ p^s \mathbb{Z})$ is an isomorphism for all $s \geq 1$ and  $H_k(C,  \mathbb{Z}/ p^j \mathbb{Z}) \to H_k^{cont}(\widehat{C}_p,  \mathbb{Z}/ p^j \mathbb{Z})$  is an isomorphism between $p$-finite abelian groups, so direct sums of copies of some $\mathbb{Z}/ p^s \mathbb{Z}$, where repetition is permitted. Hence the 2-pages of the spectral sequences are naturally isomorphic, hence the infinite pages are naturally isomorphic and the map  $H_n(G,  \mathbb{Z}/ p^j \mathbb{Z}) \to H_n^{cont}(\widehat{G}_p,  \mathbb{Z}/ p^j \mathbb{Z})$ is an isomorphism for all $n, j \geq 1$.
\end{proof}

\begin{lemma} \label{L3} Let $p \geq 2$ be a prime integer. The canonical map $F \wedge \gamma_n(F) \to \widehat{F}_p \widehat{\wedge} \overline{\gamma_n}$ is injective, hence the canonical map
$$\mu(F, \gamma_n(F))/ \Delta \simeq (F \wedge \gamma_n(F)) \rtimes F \to  \varprojlim_U (U \wedge \gamma_n(U)) \rtimes U$$ is injective, where the inverse limit is over all $p$-finite ( i.e. finite $p$-group) quotients $U$ of $F$. In particular $(F \wedge \gamma_n(F)) \rtimes F$ is residually $p$-finite, hence residually nilpotent.
\end{lemma}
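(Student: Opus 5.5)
We compare the two short exact sequences
$$1 \to H_3(G,\Z) \to F \wedge \gamma_n(F) \to \gamma_{n+1}(F) \to 1 \qquad\text{and}\qquad 1 \to \mathcal{M} \to \widehat{F}_p \widehat{\wedge} \overline{\gamma_n} \to \overline{\gamma_{n+1}},$$
where $G = F/\gamma_n(F)$ and the second sequence is exact by Lemma \ref{L1}. The canonical map $F \wedge \gamma_n(F) \to \widehat{F}_p \widehat{\wedge} \overline{\gamma_n}$ sends $g_1\wedge g_2$ to the compatible family of images $\bar g_1 \wedge \bar g_2 \in U\wedge \gamma_n(U)$. It commutes with the commutator maps to $\gamma_{n+1}(F) \to \overline{\gamma_{n+1}}$, so we obtain a morphism of exact sequences.

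The first step is injectivity on the right. Since $F$ is residually $p$-finite, $F \to \widehat{F}_p$ is injective, so $\gamma_{n+1}(F) \to \overline{\gamma_{n+1}}$ is injective. Consequently any element of the kernel of $F\wedge\gamma_n(F)\to \widehat{F}_p\widehat{\wedge}\overline{\gamma_n}$ already lies in $H_3(G,\Z)$, and it remains to show that the restricted map $H_3(G,\Z) \to \mathcal{M}$ is injective.

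The second step, and the main obstacle, is identifying this restricted map with the map of Lemma \ref{L2}. In the proof of Lemma \ref{L1}, $\mathcal{M} = \varprojlim_K V_K \cong \varprojlim_K H_3(F/RK,\Z)$, where $R=\gamma_n(F)$, and this limit embeds in $H_3^{cont}(\widehat{G}_p,\Z_p)$. On the discrete side, the Brown–Loday sequence for $P=M=F$ and $N=R$ identifies $H_3(G,\Z)=H_3(F/R,\Z)$ with $V=Ker(F\wedge R\to F)$, since $H_3(F)=H_2(F)=0$. Naturality of that exact sequence under the quotient maps $F\to F/K$, $R\to RK/K$ then shows that the composite $H_3(G,\Z)\to V\to V_K\cong \mathrm{coker}(H_3(F/K)\to H_3(F/RK))$ is induced by $G=F/R\to F/RK$. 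Passing to the limit, $H_3(G,\Z)\to\mathcal{M}\hookrightarrow H_3^{cont}(\widehat{G}_p,\Z_p)$ is the map induced by $G\to\widehat{G}_p$. Lemma \ref{L2} says this map is injective, so the whole map $F\wedge\gamma_n(F)\to\widehat{F}_p\widehat{\wedge}\overline{\gamma_n}$ is injective. The care needed here is checking that the Brown–Loday identifications are natural in the pair $(P,N)$; I would argue this via functoriality of the construction $M \wedge N$ and its defining long exact sequence.

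The third step handles the semidirect products. The map $(F\wedge\gamma_n(F))\rtimes F\to\varprojlim_U (U\wedge\gamma_n(U))\rtimes U$ is compatible with the projections onto $F\to\widehat{F}_p$, which is injective. On the normal factors it is the injective map just established, because $\varprojlim_U$ of the semidirect products is the semidirect product of the limits. An element $(w,f)$ in the kernel therefore has $f=1$ and then $w=1$.

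Finally, the target is an inverse limit of finite $p$-groups $(U\wedge\gamma_n(U))\rtimes U$, which are finite by \cite{Ellis}. Hence $(F\wedge\gamma_n(F))\rtimes F$ embeds in a pro-$p$ group and is residually $p$-finite. Since finite $p$-groups are nilpotent, it is also residually nilpotent.
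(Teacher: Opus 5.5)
Your proposal is correct and follows the paper's proof essentially step for step: the morphism of exact sequences, injectivity of $\gamma_{n+1}(F)\to\overline{\gamma_{n+1}}$ from residual $p$-finiteness, injectivity of $H_3(G,\Z)\to\mathcal{M}$ by composing with the embedding of Lemma \ref{L1} and invoking Lemma \ref{L2}, then the semidirect product and residual $p$-finiteness. Your naturality argument for the Brown--Loday sequence supplies a detail the paper leaves implicit, with one harmless slip: at the finite level $V_K$ need not equal $\mathrm{coker}(H_3(F/K,\Z)\to H_3(F/RK,\Z))$, since the sequence continues with $V_K\to H_2(F/K,\Z)$ and that map can be nonzero, but your composite still factors through $H_3(F/RK,\Z)\to V_K$ and Lemma \ref{L1} gives $\varprojlim_K V_K\cong\varprojlim_K H_3(F/RK,\Z)$, so the conclusion stands.
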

\begin{proof}
Consider the commutative diagram with exact rows
\[ 
\begin{tikzcd}
 1\arrow{r}{}&  H_3(G, \mathbb{Z})  \arrow{r}
\arrow{d}{\alpha}
 &   F \wedge \gamma_n(F) \arrow{r}{} \arrow{d}{\beta}& \ \gamma_{n+1}(F) \arrow{r}{}\arrow{d}{\delta}&  1\\%
 1 \arrow{r}{}& \mathcal{M} \arrow{r}{}  &  \widehat{F}_p \widehat{\wedge} \overline{\gamma_n}  \arrow{r}{} & \ \overline{\gamma_{n+1}} \arrow{r}{}&  1\\%
\end{tikzcd}
\]
where $\beta$ is induced by the canonical maps $F \to \widehat{F}_p$ and $\gamma_n(F) \to \overline{\gamma_n}$.
The map $\alpha$ composed with the embedding of $\mathcal{M}$ in $H_3( \widehat{G}_p, \mathbb{Z}_p)$ given by Lemma \ref{L1}  gives the  map $H_3(G, \mathbb{Z}) \to H_3( \widehat{G}_p, \mathbb{Z}_p)$ that is injective by Lemma \ref{L2}, hence $\alpha$ is injective. Recall that $\overline{\gamma_{n+1}}$ is the closure of $\gamma_{n+1}(F)$ in $ \widehat{F}_p$,  $\delta$ is a restriction of the canonical map $F \to \widehat{F}_p$, that is injective since $F$ is residually $p$-finite, hence $\delta$ is injective. Since both $\alpha$ and $\delta$ are injective we conclude that $\beta$ is injective.

Finally since $F \wedge \gamma_n(F) \to \widehat{F}_p \widehat{\wedge} \overline{\gamma_n}$ is injective and $F \to \widehat{F}_p$ is injective ( i.e. $F$ is residually $p$-finite) we deduce that $\mu(F, \gamma_n(F))/ \Delta \simeq (F \wedge \gamma_n(F)) \rtimes F \to  \varprojlim_U (U \wedge \gamma_n(U)) \rtimes U$ is injective.
\end{proof}

\begin{theorem} \label{Main}  (= Theorem A) Let $F$ be a finitely generated free group. Then the surjective map $\theta_n: \Gamma_n(F) \to \gamma_n(F)$ is an isomorphism for every $n \geq 1$. \end{theorem}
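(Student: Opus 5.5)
We argue by induction on $n$; the cases $n=1,2$ are known from \cite{Ellis2}, since $H_2(F,\Z)=0$. Assume $\theta_n$ is an isomorphism and consider $\theta_{n+1}$. The plan rests on \cite[Remark2]{Ellis2}, in the form stated in the introduction:
$$\bigcap_{m \geq 1} [\Gamma_{n+1}(F),_m F] = Ker(\theta_{n+1}).$$
So it suffices to show that this intersection is trivial. To do this we factor $\theta_{n+1}$ through the exterior product.

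\textbf{Step 1.} We build a surjective homomorphism $\phi: F \wedge \gamma_n(F) \to \Gamma_{n+1}(F)$ by sending $x \wedge y$ to $\{\theta_n^{-1}(y), x\}^{-1}$, so that $\theta_{n+1}\circ\phi$ is the commutator map $x\wedge y \mapsto [x,y]$. By the inductive hypothesis we identify $\Gamma_n(F)$ with $\gamma_n(F)$. We then check that the defining relations of the exterior product hold in $L(F)$:
\begin{itemize}
\item the two bilinearity-type identities follow from (ii)$'$ and (iii)$'$;
\item the compatibility with conjugation follows from (v)$'$ together with the fact that the action of $F$ on $\Gamma_n(F)$ inside $L(F)$ corresponds to conjugation in $\gamma_n(F)$;
\item $y\wedge y\mapsto 1$ follows from (i)$'$, using that the elements of $\Gamma_n(F)$ lie in $L(F)$ and can be fed into $\{\ ,\ \}$ with $P$-elements.
\end{itemize}
Surjectivity holds because $\Gamma_{n+1}(F)$ is generated by the elements $\{c,x\}$ with $c$ a weight-$n$ bracket and $x\in F$, and $\{c,x\}^{-1}=\{x,c\}$ by (vii). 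We also check that $\phi$ is $F$-equivariant, so that it extends to $(F\wedge\gamma_n(F))\rtimes F \to \Gamma_{n+1}(F)\rtimes F$.

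This step is where most of the care is needed. The main risk is that the action of $F$ on $\Gamma_{n+1}(F)$ in $L(F)$ is conjugation in $L(F)$ rather than a Lie-bracket action, and (viii)--(ix) will be needed to reconcile the two.

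\textbf{Step 2.} Because $\phi$ is equivariant and surjective, it maps $[F\wedge\gamma_n(F),_m F]$ onto $[\Gamma_{n+1}(F),_m F]$ for every $m$, with iterated commutators taken in the respective semidirect products. By Lemma \ref{L3}, $(F\wedge\gamma_n(F))\rtimes F$ is residually $p$-finite, hence residually nilpotent. In fact we need the finer statement $\bigcap_m [F\wedge\gamma_n(F),_m F]=1$. We get it from the embedding of Lemma \ref{L3} into $\varprojlim_U (U\wedge\gamma_n(U))\rtimes U$: each $(U\wedge\gamma_n(U))\rtimes U$ is a finite $p$-group and hence nilpotent, so $[U\wedge\gamma_n(U),_m U]=1$ for $m$ large enough.

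\textbf{Step 3.} Take $k \in Ker(\theta_{n+1})$. By Remark 2, $k$ lies in every $[\Gamma_{n+1}(F),_m F]$. We then want to pull $k$ back to a single element of $\bigcap_m[F\wedge\gamma_n(F),_m F]$, which is trivial, and conclude $k=1$.

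The subtle point is that an intersection of images need not be the image of the intersection. We avoid this by showing directly that $\phi$ is injective. The key observation is that $\theta_{n+1}\circ\phi$ has kernel $H_3(G,\Z)$, the image of which is central. Then $\phi(H_3(G,\Z)) = Ker\,\theta_{n+1}$, and this is contained in $\bigcap_m [\Gamma_{n+1}(F),_m F]$. Conversely, elements of $H_3(G,\Z)$ map trivially to $\gamma_{n+1}(F)$ and are killed inside each finite layer $U\wedge\gamma_n(U)$ by Lemma \ref{L1}'s identification.

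If this route stalls, we fall back on comparing Ellis's proof of Remark 2. Its argument expresses the kernel as exactly the image of the lower $F$-central intersection, so Step 2 then applies through the surjection. We conclude $Ker\,\theta_{n+1}=1$.
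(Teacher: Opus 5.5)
Your Steps 1 and 2 agree with the paper: your $\phi$ is the paper's $\rho_n(\mathrm{id}_F\wedge\theta_n^{-1})$, and $\bigcap_m[F\wedge\gamma_n(F),_mF]=1$ comes from Lemma~\ref{L3} exactly as you say. Step 3, however, carries the whole argument, and it fails.

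Put $X=F\wedge\gamma_n(F)$, $A=\ker(\theta_{n+1}\phi)\cong H_3(F/\gamma_n(F),\Z)$ and $B=\ker\phi\le A$. Then $\ker\theta_{n+1}=\phi(A)\cong A/B$. What must be shown is $B=A$, i.e.\ $\ker\phi=A$. Showing $\phi$ injective is the wrong target, and it is impossible in general. If $\phi$ were bijective, then $\bigcap_m[\Gamma_{n+1}(F),_mF]=\phi(\bigcap_m[X,_mF])=1$, so $\theta_{n+1}$ would be injective and $A=\ker(\theta_{n+1}\phi)=\ker\phi=1$. But already for $n=2$ and $F$ of rank $r\ge 3$ one has $A\cong H_3(\Z^r,\Z)\cong\Lambda^3\Z^r\neq 0$.

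Your supporting claim, that elements of $H_3(G,\Z)$ are killed in every finite layer $U\wedge\gamma_n(U)$, is the opposite of what the paper proves. The map $\alpha$ in the proof of Lemma~\ref{L3} is injective, so $A$ embeds in $\varprojlim_U U\wedge\gamma_n(U)$. Your fallback, that Ellis's argument yields $\ker\theta_{n+1}=\phi(\bigcap_m[X,_mF])$, is precisely the interchange of image and intersection you flagged, asserted without proof.

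The missing idea is how the paper controls that interchange. Since $\phi([X,_mF])=[\Gamma_{n+1}(F),_mF]$, Remark 2 gives $\bigcap_m A_mB=A$ with $A_m=[X,_mF]\cap A$. Knowing only $\bigcap_mA_m=1$ cannot contradict this: abstractly, $A=\Z$, $A_m=2^m\Z$, $B=3\Z$ gives $\bigcap_mA_m=0$ but $\bigcap_mA_mB=A$.

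Assuming $B\neq A$, the paper chooses a prime $p$ dividing the order of the torsion of $A/B$ (any $p$ if $A/B$ is torsion-free). It then uses residual $p$-finiteness for that particular $p$:
\begin{itemize}
\item For each normal subgroup $V$ of $p$-power index in $X\rtimes F$ the quotient is nilpotent, so $A_m\subseteq V\cap A$ for $m$ large. Hence $\bigcap_mA_mB\subseteq\bigcap_VB(V\cap A)$.
\item $A\cong H_3(G,\Z)$ is finitely generated, since $G$ is polycyclic. By Lemma~\ref{L4}, the subgroups $V\cap A$ give the full pro-$p$ topology on $A$.
\item So the right-hand side $\bigcap_VB(V\cap A)$ is the pro-$p$ closure of $B$ in $A$. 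By the choice of $p$ this is a proper subgroup, contradicting $\bigcap_mA_mB=A$.
\end{itemize}
Finite generation of $A$ and a prime adapted to $A/B$ are thus essential, and your proposal uses neither.

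A minor point in Step 1: $y\wedge y\mapsto 1$ does not follow from (i)$'$. The elements $y\in F$ and $\theta_n^{-1}(y)\in\Gamma_n(F)$ are different elements of $L(F)$, so one needs (viii)--(ix). For $n=2$, for instance, $\{[x,y],\{x,y\}\}=[\{x,y\},\{x,y\}]=1$ by (ix). The paper also leaves this step implicit.
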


\begin{proof} We use induction on $n$. The cases $n = 2$ and $n =3$ are done in \cite{Ellis2} and $n = 4$ in \cite{D-L1}.

Suppose that the map $\theta_n: \Gamma_n(F) \to \gamma_n(F)$ is an isomorphism. Then there are surjective maps $F \wedge \gamma_n(F) \to \gamma_{n+1} (F)$ and $\theta_{n+1} : \Gamma_{n+1}(F) \to \gamma_{n+1} (F)$, where the first map sends $f \wedge w$ to $[f,w]$. 

By the definition of  the groups $\Gamma_i(F)$, $i \geq 1$, there is an epimorphism of groups $\rho_n : F \wedge \Gamma_n(F) \to \Gamma_{n+1}(F)$ that sends $a \wedge b$ to $ \{ a, b \}$ .  Then we have a homomorphism $$ \theta_{n+1}  \rho_n (id_F \wedge  \theta_n^{-1}):  F \wedge \gamma_n(F) \to \gamma_{n+1}(F)$$ that sends $f \wedge c$ to $[f,c]$, hence 
$$A: = Ker(  \theta_{n+1}  \rho_n (id_F \wedge  \theta_n^{-1})) \simeq H_3(G, \mathbb{Z}), \hbox{ where } G = F/ \gamma_n(F).$$
The above isomorphism is a particular case of the last exact sequence in section \ref{prelim-exact} for $P = M = F, N = \gamma_n(F)$.

Since the map $ \theta_{n+1}  \rho_n (id_F \wedge  \theta_n^{-1})$ factors through $\Gamma_{n+1} (F)$ we conclude that 
 $$\Gamma_{n+1} (F) \simeq F \wedge \gamma_n(F)/ B$$ where $B$ is a subgroup of $A$.

 If $B = A$ then $\theta_{n+1} : \Gamma_{n+1}(F) \to \gamma_{n+1} (F)$ is an isomorphism. 

Suppose $B \not= A$ and let $p$ be a prime such that if $A/ B$ is not torsion-free then $p$ divides the order of the torsion part of $A/ B$. If $A/ B$ is torsion-free $p$ can be any prime integer.

Since  $(F \wedge \gamma_n(F)) \rtimes F$ is residually $p$-finite, it is residually nilpotent. Hence  $$\cap_m [F \wedge \gamma_n(F) , _m F] = 1$$ where $[C,_m D]$ is the left-normed commutator $[C, D, \ldots, D]$ where $D$ appears $m$ times. Indeed since $(F \wedge \gamma_n(F)) \rtimes F$ is residually nilpotent we have $\cap_{m \geq 1} \gamma_m (((F \wedge \gamma_n(F)) \rtimes F) = 1$ and $[F \wedge \gamma_n(F) , _m F] \subseteq \gamma_{m+1}((F \wedge \gamma_n(F)) \rtimes F)$.

Define $$A_m: = [F \wedge \gamma_n(F) , _m F] \cap A$$ and note that 
$$ ([(F \wedge \gamma_n(F)), _m F] B) \cap A  = A_m B $$  We claim that  \begin{equation} \label{eq-eq2} \cap_m ([(F \wedge \gamma_n(F)), _m F] B) \cap A  \subseteq \cap_m A_m B \subseteq  \widetilde{B}\end{equation}
where $\widetilde{B}$ is the intersection of the closure of $B$  (under the the pro-p topology of $\mu(F, \gamma_n(F))$) with $A$, the closure of $B$ is $\cap_V BV$ where the intersection is over all normal subgroups $V$ of $(F \wedge \gamma_n(F)) \rtimes F$ of $p$-power index.
  Indeed $$\widetilde{B} = (\cap_V BV) \cap A = \cap_V (BV \cap A) = \cap_V B(V \cap A) $$  If $(F \wedge \gamma_n(F)) \rtimes F/ V$ is nilpotent of class $m$, then 
 $[F \wedge \gamma_n(F) , _m F] \subseteq V$, so $$[F \wedge \gamma_n(F) , _m F] \cap A = A_m \subseteq V \cap A \subseteq B(V \cap A) \hbox{ and  }A_m B \subseteq B(V \cap A)$$ Hence $$\cap_m A_m B \subseteq \cap_V B(V \cap A) = \widetilde{B}$$

Note that since $G$ is a finitely generated free nilpotent group, it is polycyclic, hence of homological type $FP_{\infty}$, hence all homology groups $H_i(G, \Z)$ are finitely generated, in particular  $$A \simeq H_3(G, \mathbb{Z}) \hbox{ is a finitely generated abelian group}$$ We already proved that $(F \wedge \gamma_n(F)) \rtimes F$ is residually $p$-finite, hence $(F \wedge \gamma_n(F)) \rtimes F$  embeds in its pro-$p$ completion. Then $A$ embeds in the pro-$p$ completion of $ (F \wedge \gamma_n(F)) \rtimes F$.

By Lemma \ref{L4} the pro-$p$ topology of $(F \wedge \gamma_n(F)) \rtimes F$ induces the  pro-$p$ topology of $A$.
 Then since $A$ is finitely generated abelian, if $B$ is a direct factor of $A$ then $\widetilde{B} = B$ and if $B$ is not a direct factor of $A$ then $A/ B$ is not torsion-free,  $A \simeq A_1 \times A_2$, where $A_1$, $A_2$ are finitely generated abelian groups, $A_2$ could be trivial, $B \leq A_1 = \sqrt{B}$, $A_1/ B$ is finite of order $p^k m$, where $m$ is coprime to $p$, $k \geq 1$. Note that $k \geq 1$ by the choice of $p$ at the beginning of the proof. In the latter
case $\widetilde{B} \leq A_1$ and $|A_1/ \widetilde{B} | = p^k$. In all cases
 $\widetilde{B} \not=  A$. 
 
 Finally by \cite[Remark 2]{Ellis2}  
 \begin{equation} \label{eq-eq3} \cap_m[ \Gamma_{n+1}(F), _m F]  = Ker (\Gamma_{n+1}(F) \to \gamma_{n+1}(F))  = A/ B\end{equation}
 On the other hand 
$$\cap_m[ \Gamma_{n+1}(F), _m F]  =\cap_m [(F \wedge \gamma_n(F))/B , _m F] = (\cap_m [(F \wedge \gamma_n(F)) , _m F]B)/ B \leq A/B$$
hence by (\ref{eq-eq2})
$$\cap_m [(F \wedge \gamma_n(F)) , _m F]B = (\cap_m [(F \wedge \gamma_n(F)) , _m F]B) \cap A
  \subseteq \widetilde{B} \not= A$$ a contradiction with (\ref{eq-eq3}).
\end{proof}

\section{One corollary of Theorem A}

In this section we denote by $\theta_{n, G}$ the map previously denoted $\theta_n: \Gamma_n(G) \to \gamma_n(G)$, since we will need to use this map for different groups $G$.

\begin{prop} \label{parafreeiso1}  Let $P$ be a  finitely generated parafree group of rank $r$, so  there is a free group $F$ of rank $r$ such that $P/ \gamma_n(P) \simeq F/ \gamma_n(F)$ for all $n$ with compatible isomorphisms.  Then for  $n\geq 2, m \geq 1$ 
 $$\Gamma_n(P)/ [\Gamma_n(P), _m P] \simeq \Gamma_n(F)/ [\Gamma_n(F), _m F]$$ 
 and the group epimorphism $\theta_{n,P} : \Gamma_n(P) \to \gamma_n(P)$ induces an isomorphism
$$\theta_{n,m,P} :  \Gamma_n(P)/ [\Gamma_n(P), _m P ] \to \gamma_n(P) / \gamma_{n+m} (P)$$ \end{prop}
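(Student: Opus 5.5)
Since $[\Gamma_n(P),{}_m P]$ lies in the kernel of the composite $\Gamma_n(P)\to\gamma_n(P)\to\gamma_n(P)/\gamma_{n+m}(P)$, the map $\theta_{n,m,P}$ is well defined and surjective, so only injectivity needs proof. I will first show that $\Gamma_n(P)/[\Gamma_n(P),{}_m P]$ depends only on the nilpotent quotient $P/\gamma_{n+m}(P)$. Then I will transport the statement to $F$, where Theorem \ref{Main} gives it.

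\textbf{Step 1 (functoriality).} Let $N=\gamma_{n+m}(P)$ and $Q=P/N$. The epimorphism $P\to Q$ induces an epimorphism of multiplicative Lie algebras $L(P)\to L(Q)$, which maps $\Gamma_n(P)$ onto $\Gamma_n(Q)$ and $[\Gamma_n(P),{}_mP]$ onto $[\Gamma_n(Q),{}_mQ]$. I claim the induced map
$$\Gamma_n(P)/[\Gamma_n(P),{}_mP]\to \Gamma_n(Q)/[\Gamma_n(Q),{}_mQ]$$
is an isomorphism. To see this, note that $\Gamma_n(P)$ is generated by the brackets $\{\ldots\{x_1,x_2\},\ldots,x_n\}$. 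Changing some $x_i$ by an element of $N=\gamma_{n+m}(P)$ changes such a bracket by an element of $[\Gamma_n(P),{}_mP]$. This follows from the expansion rules (ii)', (iii)', (x), together with $\{\Gamma_i,\Gamma_j\}\subseteq\Gamma_{i+j}$ and (viii)/(ix), which convert $\{\ldots\}$ with $P$-entries into commutators with $P$ modulo deeper terms.

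A cleaner way to organise this is to build an explicit inverse. Construct a multiplicative Lie algebra $M$ (a suitable quotient of $L(P)$) in which elements of $N$ bracket trivially into weight $\ge n$ modulo $[\Gamma_n,{}_m P]$, and use the universal property of $L(Q)$. I expect this step to be the main obstacle. Ellis's structure results on $L(P)$ (that $\{\Gamma_i(P),\Gamma_j(P)\}\subseteq \Gamma_{i+j}(P)$, and that $P$ acts on $\Gamma_n(P)$ compatibly with $[\ ,\ ]$ by (ix)) should reduce it to checking the statement on generators. The point to verify is that an element of $\gamma_{n+m}(P)$ entering a weight-$n$ bracket produces a bracket of total weight at least $n+m$ built on $\Gamma_n$, hence an element of $[\Gamma_n(P),{}_mP]$.

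\textbf{Step 2 (transport to $F$).} Since $P$ is parafree, $P/\gamma_{n+m}(P)\simeq F/\gamma_{n+m}(F)$ compatibly with the isomorphisms for smaller indices. Step 1 applied to both $P$ and $F$ then gives
$$\Gamma_n(P)/[\Gamma_n(P),{}_mP]\simeq \Gamma_n(Q)/[\Gamma_n(Q),{}_mQ]\simeq \Gamma_n(F)/[\Gamma_n(F),{}_mF],$$
which is the first assertion. Naturality of $\theta$ under the map $L(P)\to L(Q)$, and likewise for $F$, makes the resulting square with $\gamma_n(-)/\gamma_{n+m}(-)$ commute. Hence $\theta_{n,m,P}$ corresponds to $\theta_{n,m,F}$.

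\textbf{Step 3 (the free case).} By Theorem \ref{Main}, $\theta_{n,F}$ is an isomorphism. Since $\theta$ is a morphism of multiplicative Lie algebras compatible with the $F$-actions, it maps $[\Gamma_n(F),{}_mF]$ onto $[\gamma_n(F),{}_mF]=\gamma_{n+m}(F)$. Therefore $\theta_{n,m,F}$ is an isomorphism, and by Step 2 so is $\theta_{n,m,P}$.
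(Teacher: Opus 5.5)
There is a genuine gap, at the point you flagged yourself: injectivity in Step 1.

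What you actually argue is a statement about generators. If replacing an entry $x_i$ by $x_ih$ with $h\in\gamma_{n+m}(P)$ changes $\{x_1,\ldots,x_n\}$ only by an element of $[\Gamma_n(P),{}_mP]$, then $(\bar x_1,\ldots,\bar x_n)\mapsto\{x_1,\ldots,x_n\}[\Gamma_n(P),{}_mP]$ is a well-defined function on $n$-tuples from $Q$. That does not give a homomorphism $\Gamma_n(Q)\to\Gamma_n(P)/[\Gamma_n(P),{}_mP]$. For that, every relation among the generators of $\Gamma_n(Q)$ would have to hold in the target. But $\Gamma_n(Q)$ is only known as a subgroup of the free object $L(Q)$, with no presentation, and knowing those relations is exactly what Ellis' conjecture is about.

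The universal-property repair is circular. A quotient $L(P)/J$ receives a homomorphism from $Q$ only if the ideal $J$ contains the ideal $I$ generated by $N$, and $I=\ker(L(P)\to L(Q))$. So you would need $I\cap\Gamma_n(P)\subseteq[\Gamma_n(P),{}_mP]$, which is the injectivity itself. Moreover $I$ contains $N$ and brackets of elements of $N$ with elements of all weights, so computations on generators of $\Gamma_n(P)$ do not control it.

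In fact the reduction returns you to the original problem. Since $Q\cong F/\gamma_{n+m}(F)$, Theorem A and the surjection $F\to Q$ show that $\theta_{n,m,Q}$ is an isomorphism. Hence injectivity in Step 1 for $P$ is equivalent to injectivity of $\theta_{n,m,P}$.

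The missing idea is to compare in the other direction, from $F$ to $P$. Then injectivity is supplied by $\theta$ and Theorem A, and only surjectivity has to be checked by hand. The paper does this as follows:
\begin{enumerate}
\item Since $F$ is free, pick $\alpha\colon F\to P$ inducing an isomorphism $\widetilde{\alpha}\colon F/\gamma_{n+m}(F)\to P/\gamma_{n+m}(P)$.
\item This gives $\alpha_{n,m}\colon\Gamma_n(F)/[\Gamma_n(F),{}_mF]\to\Gamma_n(P)/[\Gamma_n(P),{}_mP]$.
\item The composite $\theta_{n,m,P}\,\alpha_{n,m}\,\theta_{n,m,F}^{-1}$ is the restriction of $\widetilde{\alpha}$ to $\gamma_n(F)/\gamma_{n+m}(F)$, an isomorphism onto $\gamma_n(P)/\gamma_{n+m}(P)$. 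Hence $\alpha_{n,m}$ is injective.
\item Because $\mathrm{Im}(\alpha)\gamma_{n+m}(P)=P$, surjectivity of $\alpha_{n,m}$ is exactly your generator-level statement.
\end{enumerate}
The paper proves that statement, even for $h\in\gamma_{m+1}(P)$, by pulling back to a finitely generated free group $F_0$ and using $\theta_{n,F_0}^{-1}(\gamma_{n+m}(F_0))=[\Gamma_n(F_0),{}_mF_0]$, rather than by the direct expansion you sketch. Once $\alpha_{n,m}$ is bijective, both assertions follow, with your Step 3 supplying that $\theta_{n,m,F}$ is an isomorphism.

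So your key computation is the right one. But it proves surjectivity of a map into $\Gamma_n(P)/[\Gamma_n(P),{}_mP]$, not injectivity of a map out of it.
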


\begin{proof}  We fix $n$ and $m$.
Let $\pi_{n+m} : P \to P / \gamma_{n+m}(P)$ be the canonical epimorphism. Consider a group homomorphism
$$\alpha : F \to P \hbox{ such that } \pi_{n+m} \alpha : F \to P/ \gamma_{n+m}(P) \hbox{ has kernel } \gamma_{n+m}(F)$$
so we get an isomorphsim
$$\widetilde{\alpha} : F/ \gamma_{n+m}(F) \to P / \gamma_{n+m}(P)$$
Note that $n,m$ are fixed now and the map $\alpha$ depends on $n+m$ i.e. there is no $\alpha$ that works for all possible values of $n+ m$. 

The map $\alpha$ induces a homomorphism of multiplicative Lie algebras $L(F) \to L(P)$, that itself induces a group homomorphism $\Gamma_n(F) \to \Gamma_n(P)$, that induces a group homomorphism $$\alpha_{n,m} : \Gamma_n(F)/ [\Gamma_n(F), _m F ] \to \Gamma_n(P)/ [\Gamma_n(P), _m P ] $$
There is a group epimorphism $\theta_{n,P} : \Gamma_n(P) \to \gamma_n(P)$ and this induces an epimorphism
$$\theta_{n,m,P} :  \Gamma_n(P)/ [\Gamma_n(P), _m P ] \to \gamma_n(P) / \gamma_{n+m} (P)$$
By Theorem A the map $\theta_{n, F} : \Gamma_n(F) \to \gamma_n(F)$ is an isomorphism  and induces an isomorphism of groups 
$$\theta_{n,m,F}  : \Gamma_n(F)/ [\Gamma_n(F), _m F] \to \gamma_n(F)/ \gamma_{n+m}(F)$$
Consider the group homomorphism
$$\theta_{n,m,P}  \alpha_{n,m} \theta_{n,m,F}^{-1}: \gamma_n(F)/ \gamma_{n+m}(F) \to  \gamma_n(P) / \gamma_{n+m} (P)$$
that is a restriction of the isomorphism $\widetilde{\alpha}$, hence $\theta_{n,m,P}  \alpha_{n,m} \theta_{n,m,F}^{-1}$ is an isomorphism.
Hence $\theta_{n,m,P}  \alpha_{n,m}$ is an isomorphism. Hence $\alpha_{n,m}$ is injective.

The fact that $\alpha_{n,m}$ is surjective follows from the following claim applied for $G = P$. Indeed by construction $Im(\alpha) \gamma_{n+m}(P) = P$, hence $Im (\alpha) \gamma_{m+1}(P) = P$ and we note that an arbitrary element $a_i \in P$ decomposes as $a_i = g_i h_i$ where $g_i \in Im (\alpha)$, $h_i \in\gamma_{m+1}(P)$. This implies that $\alpha_{n,m}$ is an isomorphism and consequently $\theta_{n,m}$ is an isomorphism.

\medskip
{\bf Claim} {\it Let $G$ be a group and  $ g_1, \ldots, g_n \in G$, $h_1, \ldots, h_n \in \gamma_{m+1}(G)$. Then
$$
\{ g_1 h_1, \ldots , g_i h_i,  \ldots, g_n h_n \} \in \{ g_1, \ldots, g_i, \ldots, g_n \} [\Gamma_n(G), _m G]
$$}

\medskip Proof. It suffices to show it for a finitely generated free group $F_0$ and then consider a homomorphism $F_0 \to G$ such that the image contains $h_1, \ldots, h_n, g_1, \ldots, g_n$. This induces a homomorphism of multiplicative Lie algebras $\mu : L(F_0) \to L(G)$ that sends $[\Gamma_n(F_0), _m F_0]$ inside $[\Gamma_n(G), _m G]$. Note that by definition $\mu$ commutes with $\{, \}$-bracket and with commutator bracket $[ ,]$.

Now suppose $G = F_0$. By Theorem A $\theta_{n, F_0} : \Gamma_n(F_0) \to \gamma_n(F_0)$ is an isomorphism. Let
$$c := \{ g_1 , \ldots, g_i , \ldots, g_n  \}^{-1} \{ g_1 h_1, \ldots, g_i h_i, \ldots, g_n h_n \}$$
Then
$$b: =\theta_{n, F_0}(c) =[ g_1, \ldots, g_i, \ldots, g_n ]^{-1} [ g_1 h_1, \ldots, g_i h_i,  \ldots, g_n h_n ] $$
Since the Lie algebra ( over $\Z$) associated to the lower central series $\gamma_n (F)$ of $F$ is the free Lie algebra over the free abelian group $F_0^{ab} = F_0/ F_0'$ we get that the image of $b$ in $F_0/ \gamma_{m+n}(F_0)$ is zero. Thus $b \in \gamma_{m+n}(F_0)$.
But $\theta_{n, F_0}^{-1} (\gamma_{m+n}(F_0)) = \theta_{n, F_0}^{-1} ( [\gamma_n(F_0), _m F_0]) = [\Gamma_n(F_0), _m F_0]$, hence $$c = \theta_{n, F_0}^{-1} (b) \in [\Gamma_n(F_0),_m F_0]$$
 \end{proof}

\begin{cor} \label{EllisCondition}   
 Let $P$ be a  finitely generated parafree group. Then $\cap_m [\Gamma_n(P), _m P] = Ker (\Gamma_n(P) \to \gamma_n(P))$.
\end{cor}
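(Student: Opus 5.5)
The plan is to derive the corollary from Proposition \ref{parafreeiso1} together with the residual nilpotence of $P$ (part of the definition of parafree). Fix $n \geq 2$; the case $n=1$ is trivial because $\theta_1$ is the identity. Write $\theta = \theta_{n,P} : \Gamma_n(P) \to \gamma_n(P)$. We prove both inclusions.

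\emph{First inclusion:} $\cap_m [\Gamma_n(P), _m P] \subseteq Ker(\theta)$. The map $\theta$ is the restriction of the multiplicative Lie algebra homomorphism $L(P) \to P$. It is $P$-equivariant, since it is identity on $P$ and commutes with group multiplication, so it commutes with group commutators with elements of $P$. Hence
$$\theta([\Gamma_n(P), _m P]) \subseteq [\gamma_n(P), _m P] \subseteq \gamma_{n+m}(P).$$
Consequently $\theta(\cap_m [\Gamma_n(P), _m P]) \subseteq \cap_m \gamma_{n+m}(P) = 1$, because $P$ is residually nilpotent. This inclusion holds for any residually nilpotent group.

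\emph{Second inclusion:} $Ker(\theta) \subseteq \cap_m [\Gamma_n(P), _m P]$. Let $c \in Ker(\theta)$ and fix $m \geq 1$. By Proposition \ref{parafreeiso1}, the map $\theta_{n,m,P}$ induced by $\theta$ is injective on $\Gamma_n(P)/[\Gamma_n(P), _m P]$. The image of $c$ maps to $\theta(c)\gamma_{n+m}(P) = 1$, so $c \in [\Gamma_n(P), _m P]$. Since $m$ was arbitrary, $c$ lies in the intersection.

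The only point needing care is that $\theta_{n,m,P}$ is well defined, i.e. the compatibility used in the first inclusion, namely $\theta([\Gamma_n(P), _m P]) \subseteq \gamma_{n+m}(P)$. This follows from the fact that $\theta$ is a group homomorphism on $L(P)$ restricting to the identity on $P$. All the real work (Theorem A transported to $P$) is already contained in Proposition \ref{parafreeiso1}, so no step here is a genuine obstacle.
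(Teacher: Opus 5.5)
Your proposal is correct and is essentially the paper's argument: the paper uses Proposition \ref{parafreeiso1} to note that $[\Gamma_n(P), _m P]$ is the full preimage $\theta_{n,P}^{-1}(\gamma_{n+m}(P))$, and then intersects over $m$ using residual nilpotence, which is exactly your two inclusions combined. Your explicit check that $\theta([\Gamma_n(P), _m P]) \subseteq \gamma_{n+m}(P)$, and your separate treatment of $n=1$ (where the proposition, stated only for $n \geq 2$, is not needed), are harmless extra care.
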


\begin{proof} By Proposition \ref{parafreeiso1} $\theta_{n,m, P}$ is an isomorphism.
Hence  $[\Gamma_n(P), _m P ]$ is the full preimage $ \theta_{n, P}^{-1}( \gamma_{n+m} (P))$.

 Since $P$ is parafree, it  is residually nilpotent, i.e.  $\cap_m \gamma_{n+m}(P) = 1$.
Then  $ \cap_m [\Gamma_n(P), _m P ] = \cap_m  \theta_{n, P}^{-1}( \gamma_{n+m} (P)) = Ker ( \theta_{n, P})$.
\end{proof}

\section{Proof of Theorem B}

We list below the properties of a finitely generated free group $F$ that were used in the proof of Theorem A. The conditions where the prime $p$ appears should hold for any prime $p$.

1) $H_3(F/ R, \mathbb{Z}) \simeq Ker ( F \wedge R \to [F,R])$ where $R$ is a normal subgroup of $F$, used in Lemma \ref{L3};

2) For the pro-$p$ group $\widehat{F}_p$ we have $H_2^{cont} ( \widehat{F}_p, \mathbb{Z}_p ) = 0 = H_3^{cont}( \widehat{F}_p, \mathbb{Z}_p )$ ( in the free case $ \widehat{F}_p$   has pro-$p$ cohomological dimension $1$),  used in Lemma \ref{L1};

3) $H_3(G, \mathbb{Z})$ is finitely generated for $G = F / \gamma_n(F)$ used in Lemma \ref{L2}  and in the final stage of proof of the Main Theorem where we used that $A$ is finitely generated abelian;

3)' the fact that $H_3(G, \mathbb{Z})$ is torsion-free is used in Lemma \ref{L2} but we have stated Lemma \ref{L2} as a separate fact in 4) thus we can ignore the torsion-free condition;

4) the map $H_3(G, \mathbb{Z}) \to H_3(\widehat{G}_p, \mathbb{Z}_p)$ is injective for $G = F / \gamma_n(F)$ and every prime $p \geq 2$, this is Lemma \ref{L2};

5) $F$ is residually $p$-finite, used in the final part of Lemma \ref{L3};

6) $\cap_{m \geq 1} [\Gamma_n(F),_m F] = Ker (\Gamma_n(F) \to \gamma_n(F))$, this is \cite[Remark 2]{Ellis2}, used in the last paragraph of the proof of  Theorem \ref{Main}.

For a pro-$p$ group $B$ we have that it is free pro-$p$ if $H^2_{cont}(B, \mathbb{F}_p) = 0$, that by Pontryagin duality is equivalent to $H_2^{cont}(B, \mathbb{F}_p) = 0$. This is equivalent to $H_2^{cont}(B, \Z_p) = 0$. 

We note that by \cite{B-L1} if $H_2(F, \mathbb{Z}) = 0 = H_3(F, \mathbb{Z})$ then 
 condition 1) from the above list holds. Thus we obtain the following result, substituting $F$ with $P$ and substituring $\overline{\gamma_n}$ with the closure of $\gamma_n(P)$ in the pro-$p$ completion  $\widehat{P}_p$ i.e. the smallest pro-$p$ subgroup of  $\widehat{P}_p$ that contains $\gamma_n(P)$.
 
 \begin{theorem} \label{cor} Suppose that $P$ is a group such that for all $n \leq k$:
 
 \smallskip
 a) $H_2(P, \mathbb{Z}) = 0 = H_3(P, \mathbb{Z})$ and $H_3(P/ \gamma_n(P), \mathbb{Z})$ is finitely generated,
 
 \smallskip
 b) $\cap_{m \geq 1} [\Gamma_n(P),_m P] = Ker (\Gamma_n(P) \to \gamma_n(P))$,
 
 \smallskip
 c) for every prime integer $p \geq 2$ 
 
 \smallskip 
 c1) the map $H_3(G, \mathbb{Z}) \to H_3(\widehat{G}_p, \mathbb{Z}_p)$ is injective for $G = P / \gamma_n(P)$,
 
 c2) $P$ is residually $p$-finite,

c3) $\widehat{P}_p$ is  a free pro-$p$ group.

\medskip
Then the map $\theta_i: \Gamma_i(P) \to \gamma_i(P)$ is an isomorphism for $i \leq k+1$.
\end{theorem}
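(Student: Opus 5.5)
The proof runs by induction on $i$, replaying the proof of Theorem \ref{Main} with $F$ replaced by $P$. For $i = 1$ the map $\theta_1$ is an isomorphism for every group. For $i = 2$ it is an isomorphism by \cite{Ellis2}, since $H_2(P,\Z)=0$ by a).

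Assume $\theta_n$ is an isomorphism for some $n \leq k$. As in the proof of Theorem \ref{Main}, the epimorphism $\rho_n : P \wedge \Gamma_n(P) \to \Gamma_{n+1}(P)$ gives the composite
$$\theta_{n+1}\rho_n(id_P \wedge \theta_n^{-1}) : P \wedge \gamma_n(P) \to \gamma_{n+1}(P), \qquad f \wedge c \mapsto [f,c].$$
Its kernel is
$$A \simeq H_3(P/\gamma_n(P),\Z).$$
This follows from the exact sequence of section \ref{prelim-exact} with $M = P$ and $N = \gamma_n(P)$, because $H_2(P,\Z) = 0 = H_3(P,\Z)$ by a). By a), $A$ is finitely generated abelian. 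Since the composite factors through $\Gamma_{n+1}(P)$, we get $\Gamma_{n+1}(P) \simeq (P \wedge \gamma_n(P))/B$ for some $B \leq A$. It remains to show $B = A$.

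The main step is the analogue of Lemma \ref{L3}: for every prime $p$, the group $(P\wedge\gamma_n(P))\rtimes P$ is residually $p$-finite. Define $\widehat{P}_p \widehat{\wedge} \overline{\gamma_n(P)}$ as the inverse limit of $U \wedge \gamma_n(U)$ over the $p$-finite quotients $U = P/K$. Then run the proof of Lemma \ref{L1} for $P$.
\begin{itemize}
\item In that proof, the vanishing of $\varprojlim_K H_s(P/K,\Z)$ for $s=2,3$ used only that $\widehat{P}_p$ is free pro-$p$, which is c3). Also $\varprojlim_K P/\gamma_n(P)K \simeq \widehat{G}_p$ with $G = P/\gamma_n(P)$.
\item The argument therefore shows that $\mathcal{M} = Ker(\widehat{P}_p \widehat{\wedge} \overline{\gamma_n(P)} \to \overline{\gamma_{n+1}(P)})$ embeds in $H_3^{cont}(\widehat{G}_p,\Z_p)$.
\end{itemize}
Next use the commutative diagram of Lemma \ref{L3}.
\begin{itemize}
\item The left vertical map $A \to \mathcal{M}$ is injective, because its composite with the embedding into $H_3^{cont}(\widehat{G}_p,\Z_p)$ is the canonical map of c1).
\item The right vertical map is injective by c2).
\item By the five lemma, $P\wedge\gamma_n(P)$ embeds in its completion. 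Together with c2) again, $(P\wedge\gamma_n(P))\rtimes P$ embeds in $\varprojlim_U (U\wedge\gamma_n(U))\rtimes U$.
\item Each $(U\wedge\gamma_n(U))\rtimes U$ is a finite $p$-group by \cite{Ellis}, which gives residual $p$-finiteness.
\end{itemize}

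Now repeat the end of the proof of Theorem \ref{Main}. Suppose $B \neq A$. Pick a prime $p$ dividing the order of the torsion of $A/B$, or an arbitrary prime if $A/B$ is torsion-free. Residual $p$-finiteness gives $\cap_m[P\wedge\gamma_n(P),_m P] = 1$. Lemma \ref{L4}, which applies because $A$ is finitely generated abelian, shows that the pro-$p$ topology of the semidirect product induces the pro-$p$ topology on $A$. The same argument as before then gives
$$\cap_m [P\wedge\gamma_n(P),_m P]B \subseteq \widetilde{B} \neq A,$$
where $\widetilde{B}$ is the closure of $B$ intersected with $A$. On the other hand, hypothesis b) applied at level $n+1$ gives
$$\cap_m [\Gamma_{n+1}(P),_m P] = Ker\theta_{n+1} = A/B.$$
These contradict each other, so $B = A$ and $\theta_{n+1}$ is an isomorphism.

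The induction step applied with $n = k$ yields $i = k+1$. The main obstacle is an indexing issue. For the last level $n = k$, b) is stated only for $n \leq k$, whereas the argument above needs b) at level $n+1 = k+1$. I would try to avoid needing b) at $k+1$ by using it at level $n$ together with the isomorphism $\theta_n$. Since $[\Gamma_n(P),_{m+1}P] = [[\Gamma_n(P),_m P],P]$, one would like $Ker\,\theta_{n+1}$ to lie in the image of $\cap_m [P \wedge \Gamma_n(P),_m P]$, controlled through $\rho_n$. If that fails, I would read b) as also covering $n = k+1$, which is how it is verified in the intended application.
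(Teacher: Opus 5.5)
Your argument is the paper's own. Theorem~\ref{cor} is obtained there simply by rerunning the proof of Theorem~\ref{Main} with $P$ in place of $F$, using the checklist 1)--6), and apart from the two points below your write-up is a faithful expansion of that.

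\textbf{The indexing problem.} The gap you raise is genuine, and the paper passes over it. The contradiction in the step $n\to n+1$ uses b) for $\Gamma_{n+1}(P)$; this is (\ref{eq-eq3}) in the proof of Theorem~\ref{Main}. So with b) only for $n\le k$, the argument stops at $i=k$.

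Your proposed workaround cannot close this. Since $\theta$ is a group homomorphism fixing $P$ elementwise, $\theta_n$ maps $[\Gamma_n(P),_m P]$ onto $[\gamma_n(P),_m P]=\gamma_{n+m}(P)$. Once $\theta_n$ is injective, b) at level $n$ is therefore equivalent to $\bigcap_m \gamma_{n+m}(P)=1$. That already follows from c2), so b) at level $n$ carries no information about $\mathrm{Ker}\,\theta_{n+1}$. The endgame needs exactly $\mathrm{Ker}\,\theta_{n+1}\subseteq \bigcap_m[\Gamma_{n+1}(P),_m P]$. So take your fallback: assume b) for $n\le k+1$, or conclude only $i\le k$. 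This costs nothing for Theorem B, since Proposition~\ref{parafree} verifies every hypothesis for all $n$.

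\textbf{The step using Lemma~\ref{L4}.} There is a second gap, also inherited from the paper, where you invoke Lemma~\ref{L4}. That lemma, as stated, is false for general residually $p$-finite groups. Take $\lambda\in\Z_p\setminus\Q$ and $H=\Z_p\cap(\Z[1/p]+\Z[1/p]\lambda)$.
\begin{itemize}
\item $H$ is residually $p$-finite, since it lies in $\Z_p$.
\item $H$ contains $C=\Z+\Z\lambda\cong\Z^2$.
\item $H=\Z+p^jH$ for every $j$: approximate $h\in H$ by an integer modulo $p^j\Z_p$.
\end{itemize}
Hence every finite $p$-quotient of $H$ is cyclic, and $\widehat{C}_p\cong\Z_p^2$ cannot embed in the procyclic group $\widehat{H}_p$.

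With c1) asserting only injectivity, the topology $A$ inherits from $(P\wedge\gamma_n(P))\rtimes P$ could be strictly coarser than the $p$-adic one. Then $B$ may be dense in $A$, which destroys $\widetilde{B}\neq A$.

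In the situation of Proposition~\ref{parafree} you can bypass Lemma~\ref{L4}. There $G\cong F/\gamma_n(F)$. By the proof of Lemma~\ref{L2}, the composite $A\to\mathcal{M}\subseteq H_3^{cont}(\widehat{G}_p,\Z_p)\to H_3^{cont}(\widehat{G}_p,\Z/p^j\Z)$ is the universal-coefficient map $H_3(G,\Z)\to H_3(G,\Z/p^j\Z)$ followed by an isomorphism. Its kernel is therefore $p^jA$. Since the target is finite, $p^jA$ is open in the topology $A$ gets from $\mathcal{M}$. The topology induced from the semidirect product lies between that one and the $p$-adic one, so it is $p$-adic. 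For Theorem~\ref{cor} in its stated generality, c1) would have to be strengthened accordingly, for instance to injectivity of $H_3(G,\Z)\otimes_{\Z}\Z_p\to H_3^{cont}(\widehat{G}_p,\Z_p)$ as well.
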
 

\iffalse{In view of the proof of the cases $n = 2,3$ in \cite{Ellis2} and \cite[Remark 2]{Ellis2} we believe that conditions a), b) and c2) are natural ones. Furthermore if $G$ is finitely generated and $H_3(G, \mathbb{Z})$ is torsion-free and $G$ is (homologically) $p$-good in  dimension 3 (so the isomorphism (\ref{p-good}) holds) then by Lemma \ref{BK1} c1) holds. }\fi

Next we consider the class of finitely generated parafree groups introduced by Baumslag  in \cite{Baumslag}.

\begin{prop} \label{parafree} Let $P$ be a finitely generated, parafree group such that $H_2(P, \mathbb{Z}) = 0 = H_3(P, \mathbb{Z}) $ (e.g. this holds if $P$ satisfies the Strong Parafree Conjecture). Then all assumptions of Theorem \ref{cor} hold. \end{prop}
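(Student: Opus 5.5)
We verify the hypotheses of Theorem \ref{cor} one at a time, for every $k$. The guiding idea is that a finitely generated parafree group $P$ of rank $r$ has the same nilpotent quotients as a free group $F$ of rank $r$. Every hypothesis involving only $P/\gamma_n(P)$ or $\widehat{P}_p$ therefore transfers from $F$, and the remaining hypotheses come from Proposition \ref{Jaikin} and Corollary \ref{EllisCondition}.

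\emph{Condition a).} The vanishing $H_2(P,\Z)=0=H_3(P,\Z)$ is assumed. Since $P/\gamma_n(P)\simeq F/\gamma_n(F)$ is a finitely generated nilpotent group, it is polycyclic, hence of type $FP_\infty$. Consequently $H_3(P/\gamma_n(P),\Z)$ is finitely generated.

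\emph{Condition b).} This is exactly Corollary \ref{EllisCondition}, which rests on Proposition \ref{parafreeiso1} and hence on Theorem A.

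\emph{Condition c).} For c2) and c3) we invoke Proposition \ref{Jaikin}: $P$ is finitely generated and residually nilpotent (being parafree), so $\widehat{P}_p$ is free pro-$p$ and $P$ is residually $p$-finite for every prime $p$. For c1), set $G=P/\gamma_n(P)$. The compatible isomorphism $P/\gamma_n(P)\simeq F/\gamma_n(F)$ identifies $G$ with the free nilpotent group $F/\gamma_n(F)$. This isomorphism is natural with respect to pro-$p$ completion, so the map $H_3(G,\Z)\to H_3^{cont}(\widehat{G}_p,\Z_p)$ becomes the corresponding map for $F/\gamma_n(F)$, which is injective by Lemma \ref{L2}.

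\emph{The "e.g." clause.} If $P$ satisfies the Strong Parafree Conjecture, then $H_2(P,\Z)=0$ and $cd(P)\le 2$. The bound $cd(P)\le 2$ gives $H_3(P,\Z)=0$, so the hypothesis of the proposition holds.

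\emph{Where care is needed.} Most of these are bookkeeping. The step needing attention is the implicit requirement, noted before Theorem \ref{cor}, that condition 1) (the identification $\ker(P\wedge R\to[P,R])\simeq H_3(P/R,\Z)$) holds for $R=\gamma_n(P)$. This follows from the Brown–Loday exact sequence of Section \ref{prelim-exact} with $M=P$ and $N=R$. In that sequence $H_3(P/M,\Z)=H_3(1,\Z)=0$, so the $H_3$-terms reduce to $H_3(P/R,\Z)$, and the vanishing of $H_2(P,\Z)$ and $H_3(P,\Z)$ makes $V\simeq H_3(P/R,\Z)$. This is precisely why both vanishing hypotheses are imposed.

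A second point to check is the use of Lemma \ref{L1} for $P$. There the vanishing $\varprojlim_K H_s(P/K,\Z)=0$ for $s=2,3$ must be deduced from the freeness of $\widehat{P}_p$, via $H_s^{cont}(\widehat{P}_p,\Z_p)=0$. This is what c3) supplies. Lemma \ref{L4} applies verbatim because the lower central quotients $\gamma_{n-1}(P)/\gamma_n(P)$ are finitely generated abelian.
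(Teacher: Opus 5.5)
Your proposal is correct and follows the paper's proof. Conditions a) and c1) are transferred from the free nilpotent quotients $F/\gamma_n(F)$: finite generation of $H_3$ comes from polycyclicity, and injectivity from Lemma~\ref{L2}. Condition b) is Corollary~\ref{EllisCondition}, and c2), c3) come from Proposition~\ref{Jaikin}. Your extra remarks are also correct and only make explicit what the paper leaves implicit: the Brown--Loday sequence gives $V\simeq H_3(P/\gamma_n(P),\Z)$ once $H_2(P,\Z)=0=H_3(P,\Z)$, and $cd(P)\le 2$ implies $H_3(P,\Z)=0$.
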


\begin{proof} There is a finitely generated free group $F$ such that $P/ \gamma_n(P) \simeq F/ \gamma_n(F)$ for every positive $n$ and the isomorphisms are compatible in an obvious way. Hence conditions a) and c1) hold, where for c1) we use Lemma \ref{L2}. Condition b) is Corollary \ref{EllisCondition}.
By Proposition \ref{Jaikin} $P$ is residually $p$-finite and $\widehat{P}_p$ is a free pro-$p$ group for every prime $p$, hence condition c2) and c3)  hold. 
\end{proof}

\begin{theorem}  (= Theorem B)  Let $P$ be a parafree group such that $H_2(P, \mathbb{Z}) = 0 = H_3(P, \mathbb{Z}) $ ( e.g. $P$  satisfies the Strong Parafree Conjecture). Suppose further that if $P$ is not finitely generated then it is free.  Then $\theta_n : \Gamma_n(P) \to \gamma_n(P)$ is an isomorphism for every $n \geq 1$. \end{theorem}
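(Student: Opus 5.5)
Two cases: $P$ finitely generated, and $P$ free of arbitrary rank.

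\emph{Case 1: $P$ finitely generated.} By Proposition \ref{parafree}, $P$ satisfies all hypotheses a), b), c1), c2), c3) of Theorem \ref{cor} for every $k$. Theorem \ref{cor} then gives that $\theta_i$ is an isomorphism for all $i \leq k+1$. Since $k$ is arbitrary, $\theta_n$ is an isomorphism for every $n \geq 1$.

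Theorem \ref{cor} itself is proved by rerunning the induction of Theorem \ref{Main} with $F$ replaced by $P$:
\begin{itemize}
\item Hypothesis a) together with \cite{B-L1} gives $\mathrm{Ker}(P \wedge \gamma_n(P) \to \gamma_{n+1}(P)) \simeq H_3(P/\gamma_n(P), \Z)$, which is finitely generated.
\item c3) replaces the vanishing of $H_2^{cont}, H_3^{cont}$ of $\widehat{F}_p$ in Lemma \ref{L1}.
\item c1) and c2) replace Lemma \ref{L2} and the residual $p$-finiteness in Lemma \ref{L3}.
\item b) replaces \cite[Remark 2]{Ellis2} in the final contradiction.
\end{itemize}
I take this as given, since the statement is placed after Theorem \ref{cor}.

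\emph{Case 2: $P$ free of arbitrary rank.} I reduce to Theorem A by a direct limit argument. Write $P = \varinjlim F_\lambda$ over its finitely generated free factors $F_\lambda$ (subgroups generated by finite subsets of a basis). Each $F_\lambda$ is a retract of $P$: the map $r_\lambda : P \to F_\lambda$ killing the other basis elements is a retraction.

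\emph{Surjectivity.} Let $x \in \mathrm{Ker}(\theta_{n,P})$. Then $x$ is a finite product of brackets $\{\ldots\{x_1,x_2\},\ldots,x_n\}^{\pm1}$ with all $x_j \in P$. Only finitely many basis elements are involved, so all $x_j$ lie in some $F_\lambda$. Hence $x$ is the image of an element $y \in \Gamma_n(F_\lambda)$ under the map $\iota_* : L(F_\lambda) \to L(P)$ induced by the inclusion $\iota$.

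\emph{Functoriality.} By the universal property of $L(-)$, the map $\iota_*$ commutes with the $\theta$'s: $\theta_{n,P}\,\iota_* = \iota\,\theta_{n,F_\lambda}$. Therefore $\iota(\theta_{n,F_\lambda}(y)) = \theta_{n,P}(x) = 1$. Since $\iota$ is injective, $\theta_{n,F_\lambda}(y) = 1$. By Theorem A, $\theta_{n,F_\lambda}$ is injective, so $y = 1$ and hence $x = \iota_*(y) = 1$.

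This argument does not even need the retraction. Its one point to check is that every element of $\Gamma_n(P)$ comes from some $\Gamma_n(F_\lambda)$. That holds because $\Gamma_n(P)$ is generated by the bracket elements, each of which involves finitely many elements of $P$.

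The main obstacle is therefore already handled by Theorem \ref{cor} and Proposition \ref{parafree}. The remaining subtlety is the naturality of $\theta$ with respect to group homomorphisms. This follows because $L(\varphi)$, for a homomorphism $\varphi$, is the unique multiplicative Lie algebra map extending $\varphi$, and the composites $\theta \circ L(\iota)$ and $\iota \circ \theta$ are both multiplicative Lie algebra maps $L(F_\lambda) \to P$ agreeing on $F_\lambda$.
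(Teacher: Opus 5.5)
Your proof is correct and follows the paper's: the finitely generated case is exactly Theorem \ref{cor} combined with Proposition \ref{parafree}, and the free case reduces to Theorem A on the finitely generated free factors via naturality of $\theta$. The only difference is minor. The paper uses the retractions $F \to F_i$ to make $\Gamma_n(F_i) \to \Gamma_n(F)$ injective and so write $\Gamma_n(F)$ as a union. You observe, correctly, that injectivity of $F_\lambda \to P$ together with $\theta_{n,P}\,\iota_* = \iota\,\theta_{n,F_\lambda}$ already suffices for the kernel argument.
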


\begin{proof}If $P$ is finitely generated then the result follows from Theorem \ref{cor} and Proposition \ref{parafree}.

If $P$ is not finitely generated by assumption $P = F$ is free.
Let $X$ be a fixed free basis of $F$ and $\{ X_i \}_{i \in I}$ all finite subsets of $X$. \iffalse{, with an order $(I, \leq)$ given by $i \leq j$ if $X_i \subseteq X_j$. Note that $I$ is a directed set i.e. for $i,j \in I$ there is $k \in I$ such that $i\leq k, j \leq k$}\fi Set $F_i$ the free ( discrete) group with a free basis $X_i$. Note that since $F_i$ is a retract of $F$ via the  epimorphism $F \to F_i$ that is the identity on $X_i$ and sends all element of $X \setminus X_i$ to 1. Then $\Gamma_n(F_i)$ is a retract of $\Gamma_n(F)$ and $\gamma_n(F_i)$ is a retract of $\gamma_n(F)$. Thus the map $F_i \to F$ induces injective homomorphisms $\Gamma_n(F_i) \to \Gamma_n(F)$ and $\gamma_n(F_i) \to \gamma_n(F)$. Identifying $\Gamma_n(F_i)$ with its image in $\Gamma_n(F)$ we get that $\Gamma_n(F) = \cup_i \Gamma_n(F_i)$, similarly $\gamma_n(F) = \cup_i \gamma_n(F_i)$. Hence the restriction of the map $ \theta_{n,F}: \Gamma_n(F) \to \gamma_n(F)$ to $\Gamma_n(F_i)$ is the map $\theta_{n,F_i} : \Gamma_n(F_i) \to \gamma_n(F_i)$ that by Theorem A is an isomorphism. Hence the map $\theta_n$ is an isomorphism.

\end{proof}

{\bf Proof of Corollary C} Let 
$P = \langle X \ | \ r \rangle$ be a presentation with one relator $r $, $|X| = d$. Since $P$ is parafree there is a free group $F$ such that $P/  \gamma_n(P) \simeq F / \gamma_n(F)$ for every $n$, with obvious compatibility conditions.

Since $P$ is residually nilpotent $\cap_n \gamma_n(P) = 1$. Since $P / \gamma_n(P) \simeq F / \gamma_n(F)$ is torsion-free we get that $P$ is torsion-free , hence $r$ is not a proper power. Hence by \cite{Lyn}  $cd(P) \leq 2$, hence $H_3(P, \Z) = 0$.

Let $F_0$ be the free group with free basis $X$ and let $R$ be the normal subgroup of $F_0$ generated by $r$. If $r \in F_0'$ then $P/ \gamma_2(P) \simeq F_0/ \gamma_2(F_0)$ is free abelian of rank $|X|$, hence the free group $F$ has rank $d$ and by a result of Magnus \cite{Mag}  $P$ is free of rank $|X|$, so $r = 1$.

Suppose $r \not\in F_0'$. Then $R/ (R \cap F_0') \leq F_0/ F_0'$, so $R/ (R \cap F_0')$ is a non-trivial  subgroup in the free abelian group $F_0/ F_0'$, hence is torsion-free.
Since $[R,F_0] \leq R \cap F_0' \leq R$ and $R/ [R, F_0]$ is cyclic group with torsion-free non-trivial quotient $R/ (R \cap F_0')$ we conclude that $[R,F_0] = R \cap F_0'$ and $R/ [R, F_0] \simeq \Z$. Hence by the Hopf formula $H_2(P, \Z) \simeq (R \cap F_0') / [R,F_0] = 0$.

\end{document}